\numberwithin{equation}{section} \setlength{\textwidth}{16cm}
\newtheorem{theorem}{Theorem}[section]
\newtheorem{corollary}[theorem]{Corollary}
\newtheorem{lemma}[theorem]{Lemma}
\theoremstyle{definition}
\newtheorem{definition}[theorem]{Definition}
\theoremstyle{remark}
\newtheorem{remark}[theorem]{Remark}
\numberwithin{equation}{section}
\begin{document}
\title[Shell-like curves connected with Fibonacci numbers]{Certain classes  of
	bi-univalent functions related to Shell-like curves connected with Fibonacci numbers}
\author{N. Magesh,\; V. K. Balaji and C. Abirami}
\address{Post-Graduate and Research Department of Mathematics,\\
	Government Arts College for Men,\\
	Krishnagiri 635001, Tamilnadu, India.
	\texttt{e-mail:} $nmagi\_2000@yahoo.co.in$}
\address{ Department of Mathematics, L.N. Govt College, \\
	Ponneri, Chennai, Tamilnadu, India.
	\texttt{e-mail:} $balajilsp@yahoo.co.in$}
\address{Faculty of Engineering and Technology, SRM University, Kattankulathur-603203, Tamilnadu, India.
	\texttt{e-mail:} $shreelekha07@yahoo.com$}
\date{}

\begin{abstract}
Recently, in their pioneering work on the subject of bi-univalent functions,
Srivastava et al. \cite{HMS-AKM-PG} actually revived the study of the
coefficient problems involving bi-univalent functions. Inspired by the
pioneering work of Srivastava et al. \cite{HMS-AKM-PG}, there has been
triggering interest to study the coefficient problems for the different
subclasses of bi-univalent functions. 
Motivated largely by Ali et al. \cite{Ali-Ravi-Ma-Mina-class}, Srivastava et al. 
\cite{HMS-AKM-PG} and G\"{u}ney et al. \cite{HOG-GMS-JS-Fib-2018} in this 
paper, we consider certain classes of bi-univalent functions related to 
shell-like curves connected with Fibonacci numbers to obtain the estimates 
of second, third Taylor-Maclaurin coefficients and Fekete - Szeg\"{o} 
inequalities. Further, certain special cases are also indicated. Some 
interesting remarks of the results presented here are also discussed.
\end{abstract}

\keywords{Univalent functions, bi-univalent functions, shell-like function,
convex shell-like function, pseudo starlike function, Bazilevi\'{c} function}
\subjclass[2010]{Primary 30C45; Secondary 30C50}
\maketitle


\section{Introduction and definitions}
Let $\mathbb{R}=\left( -\infty ,\infty \right) $\ be the set of real
numbers, $\mathbb{C}$ be the set of complex numbers and%
\begin{equation*}
\mathbb{N}:=\left\{ 1,2,3,\ldots \right\} =\mathbb{N}_{0}\backslash \left\{
0\right\}
\end{equation*}%
be the set of positive integers. Let $\mathcal{A}$ denote the class of functions of the form
\begin{equation}
f(z)=z+\sum\limits_{n=2}^{\infty }a_{n}z^{n}  \label{Int-e1}
\end{equation}%
which are analytic in the open unit disk $\mathbb{D}=\{z:z\in \mathbb{C}\,\,%
\mathrm{and}\,\,|z|<1\}.$ Further, by $\mathcal{S}$ we shall denote the
class of all functions in $\mathcal{A}$ which are univalent in $\mathbb{D}.$

\par Let $\mathcal{P}$ denote the class of functions of the form
\begin{align*}
p(z)=1+p_1z+p_2z^2+p_3z^3+\ldots \qquad (z \in \mathbb{D})
\end{align*}
which are analytic with $\Re~\{p(z)\}>0.$ Here $p(z)$ is called as Caratheodory functions \cite{PLD}. It is well known that the following correspondence between the class $\mathcal{P}$ and the class of Schwarz functions $w$ exists: $p \in \mathcal{P}$ if and only if $p(z)={1+w(z)}\left/\right.{1-w(z)}.$ Let $\mathcal{P}(\beta),$ $0 \leq \beta < 1,$ denote the class of analytic functions $p$ in $\mathbb{D}$ with $p(0)=1$ and $\Re\left\{p(z)\right\} > \beta.$ 

\par For analytic functions $f$ and $g$ in $\mathbb{D},$ $f$ is said to be
subordinate to $g$ if there exists an analytic function $w$ such that 
\begin{equation*}
w(0)=0, \quad \quad |w(z)|<1 \quad \mathrm{and} \quad f(z)=g(w(z)) \qquad
(z\in\mathbb{D}).
\end{equation*}
This subordination will be denoted here by
\begin{equation*}
f \prec g \qquad (z\in\mathbb{D})
\end{equation*}
or, conventionally, by
\begin{equation*}
f(z) \prec g(z) \qquad (z\in\mathbb{D}).
\end{equation*}

In particular, when $g$ is univalent in $\mathbb{D},$
\begin{equation*}
f \prec g \qquad (z\in\mathbb{D}) ~\Leftrightarrow ~f(0)=g(0) \quad \mathrm{%
	and} \quad f(\mathbb{D}) \subset g(\mathbb{D}).
\end{equation*}
Some of the important and well-investigated subclasses of the univalent
function class $\mathcal{S}$ include (for example) the class $\mathcal{S}%
^{\ast }(\alpha )$ of starlike functions of order $\alpha $ $(0\leqq \alpha
<1)$ in $\mathbb{D}$ and the class $\mathcal{K}(\alpha )$ of convex
functions of order $\alpha $ $(0\leqq \alpha <1)$ in $\mathbb{D},$ 
the class $\mathcal{S}^{\ast }(\varphi )$ of Ma-Minda starlike functions and
the class $\mathcal{K}(\varphi )$ of Ma-Minda convex functions ($\varphi $
is an analytic function with positive real part in $\mathbb{D}$, $\varphi
(0)=1,$ $\varphi ^{\prime }(0)>0$ and $\varphi $ maps $\mathbb{D}$ onto a
region starlike with respect to 1 and symmetric with respect to the real
axis) (see \cite{PLD}). 

It is well known that every function $f\in \mathcal{S}$ has an inverse $%
f^{-1},$ defined by
\begin{equation*}
f^{-1}(f(z))=z \qquad (z\in\mathbb{D})
\end{equation*}
and
\begin{equation*}
f(f^{-1}(w))=w \qquad (|w| < r_0(f);\,\, r_0(f) \geqq \frac{1}{4}),
\end{equation*}
where
\begin{equation*}
f^{-1}(w) = w - a_2w^2 + (2a_2^2-a_3)w^3 - (5a_2^3-5a_2a_3+a_4)w^4+\ldots .
\end{equation*}

A function $f\in \mathcal{A}$ is said to be bi-univalent in $\mathbb{D}$ if
both $f(z)$ and $f^{-1}(z)$ are univalent in $\mathbb{D}.$ Let $\Sigma $
denote the class of bi-univalent functions in $\mathbb{D}$ given by (\ref%
{Int-e1}). 
Recently, in their pioneering work on the subject of bi-univalent functions,
Srivastava et al. \cite{HMS-AKM-PG} actually revived the study of the
coefficient problems involving bi-univalent functions. Various subclasses of
the bi-univalent function class $\Sigma $ were introduced and non-sharp
estimates on the first two coefficients $|a_{2}|$ and $|a_{3}|$ in the
Taylor-Maclaurin series expansion (\ref{Int-e1}) were found in several
recent investigations (see, for example, \cite%
{Ali-Ravi-Ma-Mina-class,SA-SY-BMM-2018,Bulut,Caglar-Orhan,Deniz,VBG-SBJ-Ganita-2018,Jay-SGH-SAH-2014,HMS-DB-2015,HMS-Caglar,HMS-SSE-RMA-2015,HMS-SG-FG-2015,HMS-SG-FG-2016,HMS-NM-JY-2014,HMS-GMS-NM-2013,Tang,Zaprawa}
and references therein). The afore-cited all these papers on the subject were
actually motivated by the pioneering work of Srivastava et al. \cite%
{HMS-AKM-PG}. However, the problem to find the coefficient bounds on $|a_{n}|
$ ($n=3,4,\dots $) for functions $f\in \Sigma $ is still an open problem.

The classes $\mathcal{S}\mathcal{L}(\tilde{p})$ and $\mathcal{K}\mathcal{S}\mathcal{L}(\tilde{p})$ of shell-like functions and convex shell-like functions are respectively, characterized by $zf^{\prime}\left/\right.f(z)\prec \tilde{p}(z)$ or $1+z^{2}f^{\prime\prime}\left/\right.f^{\prime}(z)\prec \tilde{p}(z),$ where $\tilde{p}(z)={(1+\tau^{2}z^{2})}\left/\right.{(1-\tau z - \tau^{2}z^{2})},$ $\tau = {(1-\sqrt{5})}\left/\right.{2} \approx -0.618.$ The classes $\mathcal{S}\mathcal{L}(\tilde{p})$ and $\mathcal{K}\mathcal{S}\mathcal{L}(\tilde{p})$ were introduced and studied by Sok\'{o}{\l} \cite{JS-1999} and Dziok et al. \cite{JD-RKR-JS-CMA-2011} respectively (see also \cite{JD-RKR-JS-AMC-2011,RKR-JS-2016}). The function $\tilde{p}$ is not univalent in $\mathbb{D},$ but it is univalent in the disc $|z| < {(3-\sqrt{5})}\left/\right.{2} \approx 0.38.$ For example, $\tilde{p}(0) = \tilde{p}\left({-1}\left/\right.{2\tau}\right)=1$ and 
$\tilde{p}\left(e^{\mp}\arccos\left(1/4\right)\right)={\sqrt{5}}\left/\right.{5}$ and it may also be noticed that ${1}\left/\right.{\left|\tau\right|} = {\left|\tau\right|}\left/\right.{1-\left|\tau\right|}$ which shows that the number $\left|\tau\right|$ divides $\left[0,\; 1\right]$ such that it fulfills the golden section. The image of the unit circle $\left|z\right|=1$ under $\tilde{p}$ is a curve described by the equation given by $\left(10x-\sqrt{5}\right)y^{2} = \left(\sqrt{5}-2x\right)\left(\sqrt{5}x-1\right)^{2},$ which is translated and revolved trisectrix of Maclaurin. The curve $\tilde{p}\left(re^{it}\right)$ is a closed curve without any loops for $0<r\leq r_{0}={(3-\sqrt{5})}\left/\right.{2} \approx 0.38.$ For $r_{0}<r<1,$ it has a loop and for $r=1$, it has a vertical asymptote. Since $\tau$ satisfies the equation $\tau^{2}=1+\tau,$  this expression can be used to obtain higher powers $\tau^{n}$ as a linear function of lower powers, which in turn can be decomposed all the way down to a linear combination of $\tau$ and $1.$ The resulting recurrence relationships yield Fibonacci numbers $u_{n}$
\[
\tau^{n} = u_{n}\tau+u_{n-1}.
\]
Recently Raina and Sok\'{o}{\l} \cite{RKR-JS-2016}, taking $\tau z =t,$ showed that
\begin{eqnarray}
\tilde{p}(z) 
&=& \dfrac{1+\tau^{2}z^{2}}{1-\tau z - \tau^{2}z^{2}} 
= 1 + \sum\limits_{n=2}^{\infty} \left(u_{n-1}+u_{n+1}\right)\tau^{n}z^{n},
\end{eqnarray}
where
\begin{eqnarray}
u_{n}=\dfrac{(1-\tau)^{n}-\tau}{\sqrt{5}},\qquad \tau = \dfrac{1-\sqrt{5}}{2}, ; \qquad n=1,\; 2,\; \ldots.
\end{eqnarray}
This shows that the relevant connection of $\tilde{p}$ with the sequence of Fibonacci numbers $u_{n}$, such that 
\[
u_{0} = 0,\quad u_{1}=1, \quad u_{n+2} = u_{n}+u_{n+1}
\]
for $n=0,\; 1,\; 2,\; 3,\; \ldots~.$ Hence
\begin{eqnarray}
\tilde{p}(z) 
&=& 1 + \tau z + 3\tau^{2}z^{2} + 4\tau^{3}z^{3}+7\tau^{4}z^{4}+11\tau^{5}z^{5}+\ldots.   
\end{eqnarray}
We note that the function $\tilde{p}$ belongs to the class $P(\beta)$ with $\beta = \dfrac{\sqrt{5}}{10} \approx 0.2236$ (see \cite{RKR-JS-2016}).

Motivated by works of Ali et al. \cite{Ali-Ravi-Ma-Mina-class}, G\"{u}ney et al. \cite{HOG-GMS-JS-Fib-2018} and Orhan et al. \cite{HO-NM-VKB-TMJ-2017}, we introduce the following new subclasses of bi-univalent functions, as follows.

\begin{definition}
\label{def1.1} A function $f\in \Sigma$ of the form 
\begin{equation*}
f(z) = z + \sum\limits_{n=2}^{\infty} a_{n} z^{n}, 
\end{equation*}
belongs to the class $\mathcal{W}\mathcal{S}\mathcal{L}_{\Sigma}(\gamma, \lambda, \alpha, \tilde{p}),
$ $\gamma\in \mathbb{C}\backslash\{0\},$ $\alpha \geq 0$ and $\lambda \geq 0,$  if the following conditions are satisfied: 
\begin{equation}  \label{CR-RAP-NM-P1-e1}
1+\frac{1}{\gamma}\left((1-\alpha+2\lambda)\frac{f(z)}{z}+(\alpha-2%
\lambda)f^{\prime }(z)+\lambda zf^{\prime \prime }(z)-1\right) \prec \widetilde{p(z)}=\dfrac{1+\tau^{2}z^{2}}{1-\tau z - \tau^{2}z^{2}},\;z\in\mathbb{D}
\end{equation}
and for $g(w)=f^{-1}(w)$ 
\begin{equation}  \label{CR-RAP-NM-P1-e2}
1+\frac{1}{\gamma}\left((1-\alpha+2\lambda)\frac{g(w)}{w}+(\alpha-2%
\lambda)g^{\prime }(w)+\lambda wg^{\prime \prime }(w)-1\right) \prec \widetilde{p(w)}=\dfrac{1+\tau^{2}w^{2}}{1-\tau w - \tau^{2}w^{2}},\;w\in\mathbb{D},
\end{equation}
where
$\tau = \dfrac{1-\sqrt{5}}{2} \approx -0.618.$
\end{definition}

It is interesting to note that the special values of $\alpha,$ $\gamma$ and $\lambda$ lead the class $\mathcal{W}\mathcal{S}\mathcal{L}_{\Sigma}(\gamma,
\lambda, \alpha, \tilde{p})$ to various subclasses, we illustrate the
following subclasses:

\begin{enumerate}
\item For $\alpha=1+2\lambda,$ we get the class $\mathcal{W}\mathcal{S}\mathcal{L}
_{\Sigma}(\gamma, \lambda, 1+2\lambda, \tilde{p})\equiv \mathcal{F}\mathcal{S}\mathcal{L}_{\Sigma}(\gamma, \lambda, \tilde{p}).$ A function $f\in \Sigma$ of the form 
\begin{equation*}
f(z) = z + \sum\limits_{n=2}^{\infty} a_{n} z^{n}, 
\end{equation*}
is said to be in $\mathcal{F}\mathcal{S}\mathcal{L}_{\Sigma}(\gamma, \lambda, \tilde{p}),$ if the
following conditions 
\begin{equation*}
1+\frac{1}{\gamma}\left(f^{\prime }(z)+\lambda zf^{\prime \prime
}(z)-1\right) \prec \widetilde{p(z)}=\dfrac{1+\tau^{2}z^{2}}{1-\tau z - \tau^{2}z^{2}},\;z\in\mathbb{D} 
\end{equation*}
and for $g(w)=f^{-1}(w)$ 
\begin{equation*}
1+\frac{1}{\gamma}\left(g^{\prime }(w)+\lambda wg^{\prime \prime
}(w)-1\right) \prec \widetilde{p(w)}=\dfrac{1+\tau^{2}w^{2}}{1-\tau w - \tau^{2}w^{2}},\;w\in\mathbb{D}, 
\end{equation*}
hold, where
$\tau = \dfrac{1-\sqrt{5}}{2} \approx -0.618.$

\item For $\lambda=0,$ we obtain the class $\mathcal{W}\mathcal{S}\mathcal{L}_{\Sigma}(\gamma, 0,
\alpha, \tilde{p})\equiv \mathcal{B}\mathcal{S}\mathcal{L}_{\Sigma}(\gamma, \alpha, \tilde{p}).$ A
function $f\in \Sigma$ of the form 
\begin{equation*}
f(z) = z + \sum\limits_{n=2}^{\infty} a_{n} z^{n}, 
\end{equation*}
is said to be in $\mathcal{B}\mathcal{S}\mathcal{L}_{\Sigma}(\gamma, \alpha, \tilde{p}),$ if the
following conditions 
\begin{equation*}
1+\frac{1}{\gamma}\left((1-\alpha)\frac{f(z)}{z}+\alpha f^{\prime
}(z)-1\right) \prec \widetilde{p(z)}=\dfrac{1+\tau^{2}z^{2}}{1-\tau z - \tau^{2}z^{2}},\;z\in\mathbb{D} 
\end{equation*}
and for $g(w)=f^{-1}(w)$ 
\begin{equation*}
1+\frac{1}{\gamma}\left((1-\alpha)\frac{g(w)}{w}+\alpha g^{\prime
}(w)-1\right)\prec \widetilde{p(w)}=\dfrac{1+\tau^{2}w^{2}}{1-\tau w - \tau^{2}w^{2}},\;w\in\mathbb{D} 
\end{equation*}
hold, where
$\tau = \dfrac{1-\sqrt{5}}{2} \approx -0.618.$

\item For $\lambda=0$ and $\alpha=1,$ we have the class $\mathcal{W}\mathcal{S}\mathcal{L}_{\Sigma}(\gamma, 0, 1, \tilde{p})\equiv \mathcal{H}\mathcal{S}\mathcal{L}_{\Sigma}(\gamma, \tilde{p}).$
A function $f\in \Sigma$ of the form 
\begin{equation*}
f(z) = z + \sum\limits_{n=2}^{\infty} a_{n} z^{n}, 
\end{equation*}
is said to be in $\mathcal{H}\mathcal{S}\mathcal{L}_{\Sigma}(\gamma, \tilde{p}),$ if the following
conditions 
\begin{equation*}
1+\frac{1}{\gamma}\left(f^{\prime }(z)-1\right) \prec \widetilde{p(z)}=\dfrac{1+\tau^{2}z^{2}}{1-\tau z - \tau^{2}z^{2}},\;z\in\mathbb{D} 
\end{equation*}
and for $g(w)=f^{-1}(w)$ 
\begin{equation*}
1+\frac{1}{\gamma}\left(g^{\prime }(w)-1\right) \prec \widetilde{p(w)}=\dfrac{1+\tau^{2}w^{2}}{1-\tau w - \tau^{2}w^{2}},\;w\in\mathbb{D} 
\end{equation*}
hold, where
$\tau = \dfrac{1-\sqrt{5}}{2} \approx -0.618.$
\end{enumerate}

\begin{definition}
\label{def1.2} A function $f\in \Sigma$ of the form 
\begin{equation*}
f(z) = z + \sum\limits_{n=2}^{\infty} a_{n} z^{n}, 
\end{equation*}
belongs to the class $\mathcal{R}\mathcal{S}\mathcal{L}_{\Sigma}(\gamma, \lambda, \tilde{p}),$ $\gamma\in \mathbb{C}\backslash\{0\}$ and $\lambda \geq 0,$  if the following conditions are satisfied: 
\begin{equation}  \label{C1-e1}
1+\frac{1}{\gamma}\left(\frac{z^{1-\lambda}f^{\prime}(z)}{(f(z))^{1-\lambda}}%
-1\right) \prec \widetilde{p(z)}=\dfrac{1+\tau^{2}z^{2}}{1-\tau z - \tau^{2}z^{2}},\;z\in\mathbb{D}
\end{equation}
and for $g(w)=f^{-1}(w)$ 
\begin{equation}  \label{C1-e2}
1+\frac{1}{\gamma}\left(\frac{w^{1-\lambda}g^{\prime}(w)}{(g(w))^{1-\lambda}}%
-1\right) \prec \widetilde{p(w)}=\dfrac{1+\tau^{2}w^{2}}{1-\tau w - \tau^{2}w^{2}},\;w\in\mathbb{D},
\end{equation}
where
$\tau = \dfrac{1-\sqrt{5}}{2} \approx -0.618.$
\end{definition}

\begin{enumerate}
\item For $\lambda=0,$ we have the class $\mathcal{R}\mathcal{S}\mathcal{L}_{\Sigma}(\gamma,
0, \tilde{p})\equiv \mathcal{S}\mathcal{L}_{\Sigma}(\gamma, \tilde{p}).$ A function $f\in
\Sigma$ of the form 
\begin{equation*}
f(z) = z + \sum\limits_{n=2}^{\infty} a_{n} z^{n}, 
\end{equation*}
is said to be in $\mathcal{S}\mathcal{L}_{\Sigma}(\gamma,\; \tilde{p}),$ if the following
conditions 
\begin{equation*}
1+\frac{1}{\gamma}\left(\frac{zf^{\prime }(z)}{f(z)}-1\right) \prec \widetilde{p(z)}=\dfrac{1+\tau^{2}z^{2}}{1-\tau z - \tau^{2}z^{2}},\;z\in\mathbb{D} 
\end{equation*}
and for $g(w)=f^{-1}(w)$ 
\begin{equation*}
1+\frac{1}{\gamma}\left(\frac{wg^{\prime }(w)}{g(w)}-1\right) \prec \widetilde{p(w)}=\dfrac{1+\tau^{2}w^{2}}{1-\tau w - \tau^{2}w^{2}},\;w\in\mathbb{D} 
\end{equation*}
hold, where
$\tau = \dfrac{1-\sqrt{5}}{2} \approx -0.618.$

\begin{remark}
For $\gamma=1$  the class $\mathcal{S}\mathcal{L}_{\Sigma}(1, \tilde{p})\equiv \mathcal{S}\mathcal{L}_{\Sigma}(\tilde{p})$ was introduced and studied  G\"{u}ney et al. \cite{HOG-GMS-JS-Fib-2018}.
\end{remark}

\item For $\lambda=1,$ we have the class $\mathcal{R}\mathcal{S}\mathcal{L}_{\Sigma}(\gamma,
1, \tilde{p})\equiv \mathcal{H}\mathcal{S}\mathcal{L}_{\Sigma}(\gamma, \tilde{p}).$ 
\end{enumerate}
\begin{definition}
	\label{def1.3} A function $f\in \Sigma$ of the form 
	\begin{equation*}
	f(z) = z + \sum\limits_{n=2}^{\infty} a_{n} z^{n}, 
	\end{equation*}
	belongs to the class $\mathcal{S}\mathcal{L}\mathcal{B}_{\Sigma}(\lambda; \tilde{p}),$  $\lambda \geq 1,$ if the following conditions are satisfied: 
	\begin{equation}  \label{C1-e1}
	\frac{z\left[f^{\prime}(z)\right]^{\lambda}}{f(z)} \prec \widetilde{p(z)}=\dfrac{1+\tau^{2}z^{2}}{1-\tau z - \tau^{2}z^{2}},\;z\in\mathbb{D}
	\end{equation}
	and for $g(w)=f^{-1}(w)$ 
	\begin{equation}  \label{C1-e2}
	\frac{w\left[g^{\prime}(w)\right]^{\lambda}}{g(w)} \prec \widetilde{p(w)}=\dfrac{1+\tau^{2}w^{2}}{1-\tau w - \tau^{2}w^{2}},\;w\in\mathbb{D},
	\end{equation}
	where
	$\tau = \dfrac{1-\sqrt{5}}{2} \approx -0.618.$
\end{definition}

\begin{enumerate}
	\item For $\lambda=1,$ we have the class $\mathcal{S}\mathcal{L}\mathcal{B}_{\Sigma}(1;\tilde{p})\equiv \mathcal{S}\mathcal{L}_{\Sigma}(\tilde{p}).$ A function $f\in
	\Sigma$ of the form 
	\begin{equation*}
	f(z) = z + \sum\limits_{n=2}^{\infty} a_{n} z^{n}, 
	\end{equation*}
	is said to be in $\mathcal{S}\mathcal{L}_{\Sigma}(\tilde{p}),$ if the following
	conditions 
	\begin{equation*}
	\frac{zf^{\prime}(z)}{f(z)} \prec \widetilde{p(z)}=\dfrac{1+\tau^{2}z^{2}}{1-\tau z - \tau^{2}z^{2}},\;z\in\mathbb{D}
	\end{equation*}
	and for $g(w)=f^{-1}(w)$ 
	\begin{equation*}
	\frac{wg^{\prime}(w)}{g(w)} \prec \widetilde{p(w)}=\dfrac{1+\tau^{2}w^{2}}{1-\tau w - \tau^{2}w^{2}},\;w\in\mathbb{D},
	\end{equation*}
	hold, where
	$\tau = \dfrac{1-\sqrt{5}}{2} \approx -0.618.$
\end{enumerate}
\begin{definition}
	\label{def1.4} A function $f\in \Sigma$ of the form 
	\begin{equation*}
	f(z) = z + \sum\limits_{n=2}^{\infty} a_{n} z^{n}, 
	\end{equation*}
	belongs to the class $\mathcal{P}\mathcal{S}\mathcal{L}_{\Sigma}(\lambda;\tilde{p}),$  $0 \leq \lambda \leq 1,$ if the following conditions are satisfied: 
	\begin{equation}  \label{C1-e1}
	\frac{zf^{\prime}(z)+\lambda z^{2}f^{\prime\prime}(z)}{(1-\lambda)f(z)+\lambda zf^{\prime}(z)} \prec \widetilde{p(z)}=\dfrac{1+\tau^{2}z^{2}}{1-\tau z - \tau^{2}z^{2}},\;z\in\mathbb{D}
	\end{equation}
	and for $g(w)=f^{-1}(w)$ 
	\begin{equation}  \label{C1-e2}
	\frac{wf^{\prime}(w)+\lambda w^{2}g^{\prime\prime}(w)}{(1-\lambda)g(w)+\lambda wg^{\prime}(w)} \prec \widetilde{p(w)}=\dfrac{1+\tau^{2}w^{2}}{1-\tau w - \tau^{2}w^{2}},\;w\in\mathbb{D},
	\end{equation}
	where
	$\tau = \dfrac{1-\sqrt{5}}{2} \approx -0.618.$
\end{definition}

\begin{enumerate}
	\item For $\lambda=0,$ we have the class $\mathcal{P}\mathcal{S}\mathcal{L}_{\Sigma}(0;\tilde{p})\equiv \mathcal{S}\mathcal{L}_{\Sigma}(\tilde{p}).$ A function $f\in
	\Sigma$ of the form 
	\begin{equation*}
	f(z) = z + \sum\limits_{n=2}^{\infty} a_{n} z^{n}, 
	\end{equation*}
	is said to be in $\mathcal{S}\mathcal{L}_{\Sigma}(\tilde{p}),$ if the following
	conditions 
	\begin{equation*}
	\frac{zf^{\prime}(z)}{f(z)} \prec \widetilde{p(z)}=\dfrac{1+\tau^{2}z^{2}}{1-\tau z - \tau^{2}z^{2}},\;z\in\mathbb{D}
	\end{equation*}
	and for $g(w)=f^{-1}(w)$ 
	\begin{equation*}
	\frac{wg^{\prime}(w)}{g(w)} \prec \widetilde{p(w)}=\dfrac{1+\tau^{2}w^{2}}{1-\tau w - \tau^{2}w^{2}},\;w\in\mathbb{D},
	\end{equation*}
	hold, where
	$\tau = \dfrac{1-\sqrt{5}}{2} \approx -0.618.$

	\item For $\lambda=1,$ we have the class $\mathcal{P}\mathcal{S}\mathcal{L}_{\Sigma}(1;\tilde{p})\equiv \mathcal{K}\mathcal{S}\mathcal{L}_{\Sigma}(\tilde{p}).$ A function $f\in
	\Sigma$ of the form 
	\begin{equation*}
	f(z) = z + \sum\limits_{n=2}^{\infty} a_{n} z^{n}, 
	\end{equation*}
	is said to be in $\mathcal{K}\mathcal{S}\mathcal{L}_{\Sigma}(\tilde{p}),$ if the following
	conditions 
	\begin{equation*}
	1 + \frac{z^{2}f^{\prime\prime}(z)}{f^{\prime}(z)} \prec \widetilde{p(z)}=\dfrac{1+\tau^{2}z^{2}}{1-\tau z - \tau^{2}z^{2}},\;z\in\mathbb{D}
	\end{equation*}
	and for $g(w)=f^{-1}(w)$ 
	\begin{equation*}
	1 + \frac{w^{2}g^{\prime\prime}(w)}{g^{\prime}(w)} \prec \widetilde{p(w)}=\dfrac{1+\tau^{2}w^{2}}{1-\tau w - \tau^{2}w^{2}},\;w\in\mathbb{D},
	\end{equation*}
	hold, where
	$\tau = \dfrac{1-\sqrt{5}}{2} \approx -0.618.$
	\begin{remark}
		For $\gamma=0,$ $\mathcal{P}\mathcal{S}\mathcal{L}_{\Sigma}(0, \tilde{p})\equiv \mathcal{S}\mathcal{L}_{\Sigma}(\tilde{p})$ and $\gamma=1,$  $\mathcal{P}\mathcal{S}\mathcal{L}_{\Sigma}(1, \tilde{p})\equiv \mathcal{K}\mathcal{S}\mathcal{L}_{\Sigma}(\tilde{p})$ the classes were introduced and studied  G\"{u}ney et al. \cite{HOG-GMS-JS-Fib-2018}.
	\end{remark}
\end{enumerate}

In order to prove our results for the function in the classes $\mathcal{W}\mathcal{S}\mathcal{L}
_{\Sigma}(\gamma, \lambda, \alpha, \tilde{p}),$  $\mathcal{R}\mathcal{S}\mathcal{L}_{\Sigma}(\gamma, \lambda, \tilde{p})$, $\mathcal{S}\mathcal{L}\mathcal{B}_{\Sigma}(\lambda;\tilde{p})$ and $\mathcal{P}\mathcal{S}\mathcal{L}_{\Sigma}(\lambda;\tilde{p})$, we need the following lemma. 

\begin{lemma}
	\cite{Pom}\label{lem-pom} If $p\in \mathcal{P},$ then $|p_i|\leqq 2$ for
	each $i,$ where $\mathcal{P}$ is the family of all functions $p,$ analytic
	in $\mathbb{D},$ for which
	\begin{equation*}
	\Re\{p(z)\}>0 \qquad (z \in \mathbb{D}),
	\end{equation*}
	where
	\begin{equation*}
	p(z)=1+p_1z+p_2z^2+\cdots \qquad (z \in \mathbb{D}).
	\end{equation*}
\end{lemma}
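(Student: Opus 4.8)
The plan is to prove this classical Carath\'eodory coefficient bound using only the single hypothesis $\Re\{p(z)\}>0$ together with the normalization $p(0)=1$, which forces $\Re\{p(re^{i\theta})\}$ to be, for each fixed $r\in(0,1)$, a nonnegative integrable function on the circle with mean value $1$. The cleanest route extracts the Taylor coefficients as Fourier coefficients of this positive harmonic function and then applies a crude triangle-inequality estimate; an equivalent route uses the Herglotz representation of functions with positive real part, and I would mention it as an alternative.

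First I would fix $r\in(0,1)$ and use the uniformly convergent expansion $p(re^{i\theta})=\sum_{k=0}^{\infty}p_k r^k e^{ik\theta}$ (with $p_0=1$). Integrating term by term against $e^{-in\theta}$ shows that, for $n\ge 1$,
\begin{equation*}
p_n r^n = \frac{1}{2\pi}\int_0^{2\pi} p(re^{i\theta})\, e^{-in\theta}\, d\theta = \frac{1}{\pi}\int_0^{2\pi} \Re\{p(re^{i\theta})\}\, e^{-in\theta}\, d\theta,
\end{equation*}
the second equality holding because $\frac{1}{2\pi}\int_0^{2\pi} p(re^{i\theta})\, e^{in\theta}\, d\theta = 0$ for $n\ge 1$ (only nonnegative frequencies occur in the power series) and $\Re\{p\}=\tfrac12(p+\overline p)$. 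Since $\Re\{p(re^{i\theta})\}\ge 0$ and $\frac{1}{2\pi}\int_0^{2\pi}\Re\{p(re^{i\theta})\}\, d\theta = \Re\{p(0)\}=1$, the triangle inequality gives $|p_n|\, r^n \le \frac{1}{\pi}\int_0^{2\pi}\Re\{p(re^{i\theta})\}\, d\theta = 2$. Letting $r\to 1^-$ yields $|p_n|\le 2$ for every $n\ge 1$, which is the assertion.

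As an alternative I would invoke the Herglotz representation: every $p\in\mathcal{P}$ admits the form $p(z)=\int_{\partial\mathbb{D}}\frac{1+\zeta z}{1-\zeta z}\, d\mu(\zeta)$ for some Borel probability measure $\mu$ on the unit circle; expanding the kernel as $1+2\sum_{n\ge 1}\zeta^{n}z^{n}$ and matching coefficients gives $p_n = 2\int_{\partial\mathbb{D}}\zeta^{n}\, d\mu(\zeta)$, so that $|p_n|\le 2\,\mu(\partial\mathbb{D})=2$. The step I would flag as the only real obstacle is the one needed for this second approach, namely the existence of the representing measure $\mu$, which requires a weak-$*$/Helly selection argument applied to the finite positive measures $\Re\{p(re^{i\theta})\}\,\frac{d\theta}{2\pi}$ as $r\to 1^-$. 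The first approach sidesteps this compactness step entirely — the only thing to check there is the legitimacy of term-by-term integration and the vanishing of the positive-frequency integrals, both immediate from uniform convergence on $|z|=r<1$ — which is why I would present it as the main proof. The description of the extremal functions $p(z)=(1+\eta z)/(1-\eta z)$, $|\eta|=1$, is not needed for the lemma and can be omitted.
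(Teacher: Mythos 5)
Your proof is correct. The paper itself gives no argument for this lemma---it is quoted as a known result from Pommerenke \cite{Pom} and used as a black box---so there is nothing in the paper to compare against; the Fourier-coefficient computation you present (extracting $p_n r^n=\frac{1}{\pi}\int_0^{2\pi}\Re\{p(re^{i\theta})\}e^{-in\theta}\,d\theta$, bounding it by the total mass $2$ of the nonnegative density $\Re\{p\}$ via the mean-value property, and letting $r\to1^-$) is exactly the classical Carath\'eodory argument found in the cited reference, and both technical points you flag (term-by-term integration and the vanishing of the positive-frequency integrals) are handled correctly.
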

In this investigation, we find the estimates for the coefficients $|a_2|$
and $|a_3|$ for functions in the subclass $\mathcal{W}\mathcal{S}\mathcal{L}_{\Sigma}(\gamma,
\lambda, \alpha, \tilde{p})$,  $\mathcal{R}\mathcal{S}\mathcal{L}_{\Sigma}(\gamma,
\lambda, \tilde{p})$, $\mathcal{S}\mathcal{L}\mathcal{B}_{\Sigma}(\lambda;\tilde{p})$ and $\mathcal{P}\mathcal{S}\mathcal{L}_{\Sigma}(\lambda;\tilde{p})$ Also, we obtain the upper bounds using the results of $|a_2|$ and $|a_3|.$ 

\section{Initial Coefficient Estimates and Fekete-Szeg\"{o} Inequalities}

In the following theorem, we obtain coefficient estimates for functions in
the class $f\in\mathcal{W}\mathcal{S}\mathcal{L}_{\Sigma}(\gamma, \lambda, \alpha, \tilde{p}).$

\begin{theorem}
\label{Th1} Let $f(z)=z+\sum\limits_{n=2}^{\infty}a_{n}z^{n}$ be in the
class $\mathcal{W}\mathcal{S}\mathcal{L}_{\Sigma}(\gamma, \lambda, \alpha, \tilde{p})$. Then 
\begin{align*}
\left|a_{2}\right| &\leq \dfrac{\left|\gamma\right|\left|\tau\right|}{\sqrt{\gamma \tau\left(1+2\alpha + 2\lambda\right)+(1-3\tau)(1+\alpha)^{2}}},
\end{align*}
\begin{align*}
\left|a_3\right|&\leq \dfrac{\left|\gamma\right|\left|\tau\right|\left\{(1-3\tau)(1+\alpha)^{2}\right\}}{(1+2\alpha +2\lambda)\left[\gamma\tau(1+2\alpha +2\lambda)+(1-3\tau)(1+\alpha)^{2}\right]}
\end{align*}
and
\begin{eqnarray*}
\left|a_{3}-\mu a_{2}^{2}\right|
\leq \left\{
\begin{array}{ll}
\dfrac{\gamma\left|\tau\right|}{\left(1+2\alpha + 2\lambda\right)} &; 
0 \leq \left|h(\mu)\right|\leq \dfrac{\gamma\left|\tau\right|}{4\left(1+2\alpha + 2\lambda\right)}\\
4\left|h(\mu)\right| &; 
\left|h(\mu)\right|\geq \dfrac{\gamma\left|\tau\right|}{4\left(1+2\alpha + 2\lambda\right)},
\end{array}		
\right.
\end{eqnarray*}
where
\begin{eqnarray*}
	h(\mu) = \dfrac{\left(1-\mu\right)\gamma^{2}\tau^{2}}{4\left[\gamma\tau\left(1+2\alpha + 2\lambda\right)+\left(1+\alpha\right)^{2}\left(1-3\tau\right)\right]}.
\end{eqnarray*}
\end{theorem}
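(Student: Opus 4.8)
The plan is to expand both subordination conditions in Definition \ref{def1.1} using Schwarz functions and compare Taylor coefficients. First I would write $w(z) = c_1 z + c_2 z^2 + \cdots$ and $v(w) = d_1 w + d_2 w^2 + \cdots$ for the two Schwarz functions realizing the subordinations, and recall that $p(z) := (1+w(z))/(1-w(z)) \in \mathcal{P}$ has coefficients $p_1 = c_1$, $p_2 = c_2 + c_1^2/2$, etc., so that $|c_1| \le 1$ and the relevant bounds follow from Lemma \ref{lem-pom}; alternatively work directly with $\tilde{p}(w(z)) = 1 + \tfrac{1}{2}\tilde{p}_1 p_1 z + (\tfrac{1}{2}\tilde{p}_1 p_2 + \tfrac14(\tilde{p}_2 - \tfrac12\tilde{p}_1^2) p_1^2) z^2 + \cdots$ where $\tilde{p}_1 = \tau$, $\tilde{p}_2 = 3\tau^2$ from the Fibonacci expansion recalled in the introduction. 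On the left-hand side, the operator $(1-\alpha+2\lambda)\frac{f(z)}{z} + (\alpha - 2\lambda)f'(z) + \lambda z f''(z)$ applied to $f(z) = z + a_2 z^2 + a_3 z^3 + \cdots$ produces $1 + (1+\alpha)a_2 z + (1 + 2\alpha + 2\lambda)a_3 z^2 + \cdots$, so dividing by $\gamma$ and matching gives
\begin{align*}
\frac{(1+\alpha)a_2}{\gamma} &= \tfrac12 \tau p_1, \\
\frac{(1+2\alpha+2\lambda)a_3}{\gamma} &= \tfrac12 \tau p_2 + \tfrac14(3\tau^2 - \tfrac12\tau^2)p_1^2 = \tfrac12\tau p_2 + \tfrac{5}{8}\tau^2 p_1^2.
\end{align*}
For $g = f^{-1}$ with $g(w) = w - a_2 w^2 + (2a_2^2 - a_3)w^3 - \cdots$, the same operator yields $1 - (1+\alpha)a_2 w + (1+2\alpha+2\lambda)(2a_2^2 - a_3) w^2 + \cdots$, giving the companion relations
\begin{align*}
-\frac{(1+\alpha)a_2}{\gamma} &= \tfrac12 \tau q_1, \\
\frac{(1+2\alpha+2\lambda)(2a_2^2-a_3)}{\gamma} &= \tfrac12\tau q_2 + \tfrac58\tau^2 q_1^2,
\end{align*}
where $q_1, q_2$ are the Carathéodory coefficients of the second function.

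Next I would extract $|a_2|$: from the two first-order relations, $p_1 = -q_1$, and adding the two second-order relations eliminates $a_3$, yielding $\frac{2(1+2\alpha+2\lambda)}{\gamma}a_2^2 = \tfrac12\tau(p_2 + q_2) + \tfrac58\tau^2(p_1^2 + q_1^2) = \tfrac12\tau(p_2+q_2) + \tfrac54\tau^2 p_1^2$. To get a clean bound one substitutes $p_1^2 = \frac{4(1+\alpha)^2 a_2^2}{\gamma^2\tau^2}$ from the squared first relation, collects the $a_2^2$ terms on the left, and applies $|p_2 + q_2| \le 4$. This produces $\Bigl| \frac{2(1+2\alpha+2\lambda)}{\gamma} - \frac{5(1+\alpha)^2}{\gamma^2} \Bigr| |a_2|^2 \le 2|\tau|$ — however, to match the stated form with the factor $(1-3\tau)$ rather than $5$, I expect the authors instead use the weaker (and in the shell-like setting standard) substitution: rewrite using $\tilde p_2 - \tilde p_1^2 = 3\tau^2 - \tau^2 = 2\tau^2$ and the relation $p_2 - \tfrac12 p_1^2 = \tfrac12(\text{something bounded by }2)$, or more likely they replace $\tfrac54\tau^2 p_1^2$ estimation by writing the combination so that the coefficient becomes $\gamma\tau(1+2\alpha+2\lambda) + (1-3\tau)(1+\alpha)^2$ after dividing through; the key algebraic identity to track is that $\tfrac12\tau \cdot \tfrac{4(1+\alpha)^2}{\gamma^2\tau^2} - \tfrac54 \cdot \tfrac{4(1+\alpha)^2}{\gamma^2}$ rearranges into the displayed denominator. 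I would reconcile signs using $\tau < 0$ so that $|\tau|$, $(1-3\tau)$, and the denominator are all positive, giving $|a_2| \le |\gamma||\tau|/\sqrt{\gamma\tau(1+2\alpha+2\lambda) + (1-3\tau)(1+\alpha)^2}$.

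For $|a_3|$, subtract the two second-order relations to isolate $a_3 - a_2^2$ in terms of $p_2 - q_2$, then combine with the value of $a_2^2$ just obtained; bounding $|p_2 - q_2| \le 4$ and $|p_2 + q_2| \le 4$ and simplifying yields the stated estimate. For the Fekete–Szegő functional $a_3 - \mu a_2^2$, I would write $a_3 - \mu a_2^2 = (a_3 - a_2^2) + (1-\mu)a_2^2$, express $a_3 - a_2^2 = \frac{\gamma\tau(p_2 - q_2)}{4(1+2\alpha+2\lambda)}$ and $(1-\mu)a_2^2 = (1-\mu)\cdot\frac{\gamma^2\tau^2(p_2+q_2)}{2[\gamma\tau(1+2\alpha+2\lambda)+(1+\alpha)^2(1-3\tau)]} \cdot \tfrac12$, collecting to the form $a_3 - \mu a_2^2 = \gamma\tau\bigl[(h(\mu)+\tfrac{1}{4(1+2\alpha+2\lambda)})p_2 + (h(\mu) - \tfrac{1}{4(1+2\alpha+2\lambda)})q_2\bigr]$ with $h(\mu)$ as defined. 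The standard two-case triangle-inequality argument — whether $|h(\mu)|$ is below or above $\frac{|\gamma||\tau|}{4(1+2\alpha+2\lambda)}$ determines whether the two coefficients of $p_2, q_2$ have the same sign magnitude balance — then gives the bound $\frac{|\gamma||\tau|}{1+2\alpha+2\lambda}$ in the first regime and $4|h(\mu)|$ in the second, after using $|p_2|, |q_2| \le 2$. The main obstacle I anticipate is the bookkeeping in the second paragraph: correctly tracking the sign of $\tau$ and getting the coefficient of $a_2^2$ to collapse exactly into $\gamma\tau(1+2\alpha+2\lambda) + (1-3\tau)(1+\alpha)^2$ rather than a nearby expression — this requires using the specific Fibonacci values $\tilde{p}_1 = \tau$, $\tilde{p}_2 = 3\tau^2$ in just the right place and not over-simplifying $p_1^2$ too early.
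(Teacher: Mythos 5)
Your overall route is the same as the paper's: realize both subordinations through Schwarz functions, pass to the associated Carath\'eodory functions, compare Taylor coefficients, use $c_1=-d_1$, add and subtract the two second-order relations, and finish with Lemma \ref{lem-pom} together with the standard two-case triangle-inequality argument for $a_3-\mu a_2^2$. Your left-hand side is correct: the operator does produce $1+(1+\alpha)a_2z+(1+2\alpha+2\lambda)a_3z^2+\cdots$ and the corresponding expression for $g=f^{-1}$.

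The genuine gap is in the quadratic coefficient of the composed series $\tilde p(u(z))$, and it is exactly the spot you flagged as your ``main obstacle'' without resolving it. With $u(z)=\frac{p_1}{2}z+\frac12\left(p_2-\frac{p_1^2}{2}\right)z^2+\cdots$, the $z^2$-coefficient of $\tilde p(u(z))$ is $\frac{\tilde p_1}{2}\left(p_2-\frac{p_1^2}{2}\right)+\frac{\tilde p_2}{4}p_1^2=\frac{\tilde p_1}{2}p_2+\frac{p_1^2}{4}\left(\tilde p_2-\tilde p_1\right)$, which for $\tilde p_1=\tau$, $\tilde p_2=3\tau^2$ equals $\frac{\tau}{2}p_2+\frac{p_1^2}{4}\left(3\tau^2-\tau\right)$. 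You instead used $\frac14\left(\tilde p_2-\frac12\tilde p_1^2\right)p_1^2=\frac58\tau^2p_1^2$; the subtracted quantity must be $\tilde p_1$ (linear), not $\frac12\tilde p_1^2$. This single slip is why your elimination yields the spurious factor $5$ instead of $(1-3\tau)$: with the correct term, substituting $p_1^2+q_1^2=8(1+\alpha)^2a_2^2/(\gamma^2\tau^2)$ turns $\left(-\frac{\tau}{4}+\frac{3\tau^2}{4}\right)(p_1^2+q_1^2)$ into $\frac{2(1+\alpha)^2(3\tau-1)}{\gamma^2\tau}a_2^2$, and moving it to the left gives $\frac{2}{\gamma^2\tau}\left[\gamma\tau(1+2\alpha+2\lambda)+(1-3\tau)(1+\alpha)^2\right]a_2^2=\frac{\tau}{2}(p_2+q_2)$, which is precisely the paper's identity for $a_2^2$ and hence the stated bound. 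No ``weaker substitution'' by the authors is involved; the direct computation closes once the coefficient is fixed, and the same correction propagates to the $|a_3|$ and Fekete-Szeg\"{o} estimates. (Incidentally, your stated conversion $p_1=c_1$, $p_2=c_2+c_1^2/2$ between Schwarz and Carath\'eodory coefficients is also off --- it should be $p_1=2c_1$, $p_2=2(c_2+c_1^2)$ --- though you do not actually use it downstream.)
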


\begin{proof}
Since $f\in\mathcal{W}\mathcal{S}\mathcal{L}_{\Sigma}(\gamma, \lambda, \alpha, \tilde{p})$, from the
Definition \ref{def1.1} we have 
\begin{equation}  \label{2.2}
1+\frac{1}{\gamma}\left((1-\alpha+2\lambda)\frac{f(z)}{z}+(\alpha-2%
\lambda)f^{\prime }(z)+\lambda zf^{\prime \prime }(z)-1\right) = \widetilde{p(u(z))}
\end{equation}
and 
\begin{equation}  \label{2.3}
1+\frac{1}{\gamma}\left((1-\alpha+2\lambda)\frac{g(w)}{w}+(\alpha-2%
\lambda)g^{\prime }(w)+\lambda wg^{\prime \prime }(w)-1\right)=\widetilde{p(v(w))},
\end{equation}
where $z,\; w \in \mathbb{D}$ and $g=f^{-1}$. Using the fact the function $p(z)$ of the form  
\[
p(z) = 1+p_{1}z+p_{2}z^{2}+\ldots
\]
and 
$p\prec \tilde{p}.$ Then there exists an analytic function $u$ such that 
$\left|u(z)\right|<1$ in $\mathbb{D}$ and $p(z)=\tilde{p}(u(z)).$ Therefore,
define the function 
\begin{equation*}
h(z) = \dfrac{1+u(z)}{1-u(z)} = 1+c_{1}z+c_{2}z^{2}+\ldots
\end{equation*}
is in the class $\mathcal{P}.$ It follows that
\begin{equation*}
u(z) =\dfrac{h(z)-1}{h(z)+1} = \dfrac{c_{1}}{2}z + \left(c_{2}-\dfrac{c_{1}^{2}}{2}\right)\dfrac{z^{2}}{2}
+ \left(c_{3}-c_{1}c_{2}+\dfrac{c_{1}^{3}}{4}\right)\dfrac{z^{3}}{2}+\ldots
\end{equation*}
and 
\begin{eqnarray}
\tilde{p}(u(z)) &=& 1+\tilde{p}\left(\dfrac{c_{1}}{2}z + \left(c_{2}-\dfrac{c_{1}^{2}}{2}\right)\dfrac{z^{2}}{2}
+ \left(c_{3}-c_{1}c_{2}+\dfrac{c_{1}^{3}}{4}\right)\dfrac{z^{3}}{2}+\ldots\right)\nonumber\\
&& \quad + \tilde{p}_{2}\left(\dfrac{c_{1}}{2}z + \left(c_{2}-\dfrac{c_{1}^{2}}{2}\right)\dfrac{z^{2}}{2}
+ \left(c_{3}-c_{1}c_{2}+\dfrac{c_{1}^{3}}{4}\right)\dfrac{z^{3}}{2}+\ldots\right)^{2}\nonumber\\
&& \quad + \tilde{p}_{3} \left(\dfrac{c_{1}}{2}z + \left(c_{2}-\dfrac{c_{1}^{2}}{2}\right)\dfrac{z^{2}}{2}
+ \left(c_{3}-c_{1}c_{2}+\dfrac{c_{1}^{3}}{4}\right)\dfrac{z^{3}}{2}+\ldots\right)^{3} + \ldots \nonumber\\
&=& 1+ \dfrac{\tilde{p}_{1}c_{1}}{2}z 
+ \left(\dfrac{1}{2}\left(c_{2}-\dfrac{c_{1}^{2}}{2}\right)\tilde{p}_{1}
+\dfrac{c_{1}^{2}}{4}\tilde{p}_{2}\right)z^{2}\nonumber\\
&& \quad
+ \left(\dfrac{1}{2}\left(c_{3}-c_{1}c_{2}+\dfrac{c_{1}^{3}}{4}\right)\tilde{p}_{1}
+\dfrac{1}{2}c_{1}\left(c_{2}-\dfrac{c_{1}^{2}}{2}\right)\tilde{p}_{2}
+\dfrac{c_{1}^{3}}{8}\tilde{p}_{3}\right)z^{3} + \ldots.\label{2.4}
\end{eqnarray}
Similarly, there exists an analytic function $v$ such that $\left|v(w)\right|<1$ in $\mathbb{D}$ and $p(w)=\tilde{p}(v(w)).$ Therefore, the function
\begin{equation*}
k(w) = \dfrac{1+v(w)}{1-v(w)} = 1+d_{1}w+d_{2}w^{2}+\ldots
\end{equation*}
is in the class $\mathcal{P}.$ It follows that
\begin{equation*}
v(w) =\dfrac{k(w)-1}{k(w)+1} = \dfrac{d_{1}}{2}w + \left(d_{2}-\dfrac{d_{1}^{2}}{2}\right)\dfrac{w^{2}}{2}
+ \left(d_{3}-d_{1}d_{2}+\dfrac{d_{1}^{3}}{4}\right)\dfrac{w^{3}}{2}+\ldots
\end{equation*}
and 
\begin{eqnarray}
\tilde{p}(v(w)) &=& 1+\tilde{p}\left(\dfrac{d_{1}}{2}w + \left(d_{2}-\dfrac{d_{1}^{2}}{2}\right)\dfrac{w^{2}}{2}
+ \left(d_{3}-d_{1}d_{2}+\dfrac{d_{1}^{3}}{4}\right)\dfrac{w^{3}}{2}+\ldots\right)\nonumber\\
&& \quad + \tilde{p}_{2}\left(\dfrac{d_{1}}{2}w + \left(d_{2}-\dfrac{d_{1}^{2}}{2}\right)\dfrac{w^{2}}{2}
+ \left(d_{3}-d_{1}d_{2}+\dfrac{d_{1}^{3}}{4}\right)\dfrac{w^{3}}{2}+\ldots\right)^{2}\nonumber\\
&& \quad + \tilde{p}_{3} \left(\dfrac{d_{1}}{2}w + \left(d_{2}-\dfrac{d_{1}^{2}}{2}\right)\dfrac{w^{2}}{2}
+ \left(d_{3}-d_{1}d_{2}+\dfrac{d_{1}^{3}}{4}\right)\dfrac{w^{3}}{2}+\ldots\right)^{3} + \ldots \nonumber\\
&=& 1+ \dfrac{\tilde{p}_{1}d_{1}}{2}w 
+ \left(\dfrac{1}{2}\left(d_{2}-\dfrac{d_{1}^{2}}{2}\right)\tilde{p}_{1}
+\dfrac{d_{1}^{2}}{4}\tilde{p}_{2}\right)w^{2}\nonumber\\
&& \quad
+ \left(\dfrac{1}{2}\left(d_{3}-d_{1}d_{2}+\dfrac{d_{1}^{3}}{4}\right)\tilde{p}_{1}
+\dfrac{1}{2}d_{1}\left(d_{2}-\dfrac{d_{1}^{2}}{2}\right)\tilde{p}_{2}
+\dfrac{d_{1}^{3}}{8}\tilde{p}_{3}\right)w^{3} \label{2.5}\\
&& \quad + \ldots. \nonumber
\end{eqnarray}
By virtue of \eqref{2.2}, \eqref{2.3}, \eqref{2.4} and \eqref{2.5}, we have 
\begin{equation}  \label{2.6}
\frac{1}{\gamma}(1+\alpha)a_2=\dfrac{c_{1}\tau}{2},
\end{equation}
\begin{equation}  \label{2.7}
\frac{a_3}{\gamma}(1+2\alpha+2\lambda)=\dfrac{1}{2}\left(c_{2}-\dfrac{c_{1}^{2}}{2}\right)\tau+\dfrac{3c_{1}^{2}}{4}\tau^{2},
\end{equation}

\begin{equation}  \label{2.8}
-\frac{1}{\gamma}(1+\alpha)a_2=\dfrac{d_{1}\tau}{2},
\end{equation}

and 
\begin{equation}  \label{2.9}
\frac{(1+2\alpha+2\lambda)}{\gamma}(2a^2_2-a_3) =\dfrac{1}{2}\left(d_{2}-\dfrac{d_{1}^{2}}{2}\right)\tau+\dfrac{3d_{1}^{2}}{4}\tau^{2}.
\end{equation}

From \eqref{2.6} and \eqref{2.8}, we obtain
\begin{equation*} 
c_{1}=-d_{1},
\end{equation*}
and
\begin{eqnarray} 
\dfrac{2}{\gamma^{2}}(1+\alpha)^{2} a_{2}^2 &=& \dfrac{(c_{1}^{2} + d_{1}^{2})\tau^{2}}{4}\nonumber\\
a_{2}^2 &=& \dfrac{\gamma^{2}(c_{1}^{2} + d_{1}^{2})\tau^{2}}{8(1+\alpha)^{2}}~.\label{2.10}
\end{eqnarray}
By adding  \eqref{2.7} and \eqref{2.9}, we have 
\begin{eqnarray}
\dfrac{2}{\gamma}\left(1+2\alpha + 2\lambda\right) a_{2}^{2}
= \dfrac{1}{2}\left(c_{2}+d_{2}\right)\tau - \dfrac{1}{4}\left(c_{1}^{2}+d_{1}^{2}\right)\tau + \dfrac{3}{4}\left(c_{1}^{2}+d_{1}^{2}\right)\tau^{2}.\label{2.11}
\end{eqnarray}
By substituting  \eqref{2.10} in \eqref{2.11}, we reduce that
\begin{eqnarray}
a_{2}^{2} &=& \dfrac{\gamma^{2}\left(c_{2}+d_{2}\right)\tau^{2}}{4\left[\gamma \tau\left(1+2\alpha + 2\lambda\right)+(1-3\tau)(1+\alpha)^{2}\right]}.\label{2.12}
\end{eqnarray}
Now, applying Lemma \ref{lem-pom}, we obtain
\begin{eqnarray}
\left|a_{2}\right| &\leq& \dfrac{\left|\gamma\right|\left|\tau\right|}{\sqrt{\gamma \tau\left(1+2\alpha + 2\lambda\right)+(1-3\tau)(1+\alpha)^{2}}}.\label{2.13}
\end{eqnarray}
By subtracting \eqref{2.9} from \eqref{2.7}, we obtain
\begin{eqnarray}
a_{3}= \dfrac{\gamma\left(c_{2}-d_{2}\right)\tau}{4\left(1+2\alpha + 2\lambda\right)}  + a_{2}^{2}.\label{2.14}
\end{eqnarray}
Hence by Lemma \ref{lem-pom}, we have 
\begin{eqnarray}
\left|a_{3}\right|&\leq& \dfrac{\left|\gamma\right|\left(\left|c_{2}\right|+\left|d_{2}\right|\right)\left|\tau\right|}{4\left(1+2\alpha + 2\lambda\right)}  + \left|a_{2}\right|^{2}\leq  \dfrac{\left|\gamma\right|\left|\tau\right|}{\left(1+2\alpha + 2\lambda\right)} + \left|a_{2}\right|^{2}
.\label{2.15}
\end{eqnarray}
Then in view of \eqref{2.13}, we obtain
\begin{eqnarray*}
\left|a_{3}\right|
\leq 
\dfrac{\left|\gamma\right|\left|\tau\right|\left\{(1-3\tau)(1+\alpha)^{2}\right\}}{(1+2\alpha +2\lambda)\left[\gamma\tau(1+2\alpha +2\lambda)+(1-3\tau)(1+\alpha)^{2}\right]}
\end{eqnarray*}
	From \eqref{2.14}, we have 
	\begin{eqnarray}\label{Th1-Fekete-e1}
	a_{3} -\mu a_{2}^{2} = \dfrac{\gamma\left(c_{2}-d_{2}\right)\tau}{4\left(1+2\alpha + 2\lambda\right)} + \left(1-\mu\right)a_{2}^{2}.
	\end{eqnarray}
	By substituting \eqref{2.12} in \eqref{Th1-Fekete-e1}, we have 
	\begin{eqnarray}\label{Th1-Fekete-e2}
	a_{3} -\mu a_{2}^{2} 
	&=& \dfrac{\gamma\left(c_{2}-d_{2}\right)\tau}{4\left(1+2\alpha + 2\lambda\right)} + \left(1-\mu\right) \left(\dfrac{\gamma^{2}\left(c_{2}+d_{2}\right)\tau^{2}}{4\left[\gamma \tau\left(1+2\alpha + 2\lambda\right)+(1-3\tau)(1+\alpha)^{2}\right]}\right)\nonumber\\
	&=& \left(h(\mu) + \dfrac{\gamma\left|\tau\right|}{4\left(1+2\alpha + 2\lambda\right)}\right)c_{2}
		+ \left(h(\mu) - \dfrac{\gamma\left|\tau\right|}{4\left(1+2\alpha + 2\lambda\right)}\right)d_{2},
	\end{eqnarray}
	where
	\begin{eqnarray*}
	h(\mu) = \dfrac{\left(1-\mu\right)\gamma^{2}\tau^{2}}{4\left[\gamma\tau\left(1+2\alpha + 2\lambda\right)+\left(1+\alpha\right)^{2}\left(1-3\tau\right)\right]}.
	\end{eqnarray*}
	Thus by taking modulus of \eqref{Th1-Fekete-e2}, we conclude that
	\begin{eqnarray*}
	\left|a_{3}-\mu a_{2}^{2}\right|
	\leq \left\{
	\begin{array}{ll}
	\dfrac{\gamma\left|\tau\right|}{\left(1+2\alpha + 2\lambda\right)} &; 
	0 \leq \left|h(\mu)\right|\leq \dfrac{\gamma\left|\tau\right|}{4\left(1+2\alpha + 2\lambda\right)}\\
	4\left|h(\mu)\right| &; 
	\left|h(\mu)\right|\geq \dfrac{\gamma\left|\tau\right|}{4\left(1+2\alpha + 2\lambda\right)}.
	\end{array}		
	\right.
	\end{eqnarray*}
\end{proof}

\begin{theorem}
\label{thm3.1} Let $f(z)=z+\sum\limits_{n=2}^{\infty}a_{n}z^{n}$ be in the
class $\mathcal{R}\mathcal{S}\mathcal{L}_{\Sigma}(\gamma, \lambda, \tilde{p})$. Then 
\begin{align*}
\left|a_2\right|&\leq 
\dfrac{\sqrt{2}\left|\gamma\right|\left|\tau\right|}{\sqrt{\gamma \tau\left(2+\lambda\right)\left(1+\lambda\right)+2(1-3\tau)(1+\lambda)^{2}}},
\end{align*}
\begin{align*}
\left|a_3\right|&\leq \dfrac{\left|\gamma\right|\left|\tau\right|\left\{\gamma \tau \left(2+\lambda\right)\left(1+\lambda\right)+2(1-3\tau)(1+\lambda)^{2}-2\left(2+\lambda\right)\gamma\tau\right\}}{(2+\lambda)\left[\gamma\tau(2+\lambda)\left(1+\lambda\right)+2(1-3\tau)(1+\lambda)^{2}\right]}
\end{align*}
and
\begin{eqnarray*}
\left|a_{3}-\mu a_{2}^{2}\right|
\leq \left\{
\begin{array}{ll}
\dfrac{\left|\gamma\right|\left|\tau\right|}{2+\lambda} &; 
0 \leq \left|\mu-1\right|\leq \dfrac{M}{2\left|\gamma\right|\left|\tau\right|\left(2+\lambda\right)}\\
\dfrac{2\left|1-\mu\right|\gamma^{2}\tau^{2}}{M} &; 
\left|\mu-1\right|\geq \dfrac{M}{2\left|\gamma\right|\left|\tau\right|\left(2+\lambda\right)},
\end{array}		
\right.
\end{eqnarray*}
where
\begin{eqnarray*}
	M = \gamma\tau\left(2+\lambda\right)\left(1+\lambda\right)+2\left(1+\lambda\right)^{2}\left(1-3\tau\right).
\end{eqnarray*}
\end{theorem}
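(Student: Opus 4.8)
The plan is to run the argument of Theorem~\ref{Th1} with $\mathcal{R}\mathcal{S}\mathcal{L}_{\Sigma}(\gamma,\lambda,\tilde p)$ in place of $\mathcal{W}\mathcal{S}\mathcal{L}_{\Sigma}(\gamma,\lambda,\alpha,\tilde p)$. Since $f$ lies in this class, the two subordinations of Definition~\ref{def1.2} become equalities
\[
1+\frac1\gamma\left(\frac{z^{1-\lambda}f^{\prime}(z)}{(f(z))^{1-\lambda}}-1\right)=\tilde p(u(z)),\qquad 1+\frac1\gamma\left(\frac{w^{1-\lambda}g^{\prime}(w)}{(g(w))^{1-\lambda}}-1\right)=\tilde p(v(w)),
\]
where $g=f^{-1}$ and $u,v$ are Schwarz functions. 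As in the proof of Theorem~\ref{Th1} I would replace $u,v$ by the Carath\'eodory functions $h(z)=(1+u(z))/(1-u(z))=1+c_1z+c_2z^2+\cdots$ and $k(w)=(1+v(w))/(1-v(w))=1+d_1w+d_2w^2+\cdots$, so that the right-hand sides are given by the series \eqref{2.4} and \eqref{2.5} with $\tilde p_1=\tau$ and $\tilde p_2=3\tau^2$.

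The only genuinely new computation is the Taylor expansion of the two left-hand sides. Writing $z^{1-\lambda}f^{\prime}(z)/(f(z))^{1-\lambda}=f^{\prime}(z)\,(f(z)/z)^{\lambda-1}$ and expanding the (generally non-integer) power $(1+a_2z+a_3z^2+\cdots)^{\lambda-1}$ by the binomial series, I would obtain
\[
\frac{z^{1-\lambda}f^{\prime}(z)}{(f(z))^{1-\lambda}}=1+(1+\lambda)a_2z+\Big[(2+\lambda)a_3+\tfrac{(\lambda-1)(\lambda+2)}{2}a_2^2\Big]z^2+\cdots,
\]
which one checks reduces to $zf^{\prime}(z)/f(z)$ when $\lambda=0$ and to $f^{\prime}(z)$ when $\lambda=1$, consistent with the special cases listed after Definition~\ref{def1.2}. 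Substituting the inverse series $g(w)=w-a_2w^2+(2a_2^2-a_3)w^3+\cdots$ gives the analogous expansion for $g$. Comparing coefficients of $z,z^2$ and of $w,w^2$ on the two sides then produces four relations, the analogues of \eqref{2.6}--\eqref{2.9}; in particular $\frac{1+\lambda}{\gamma}a_2=\frac{\tau c_1}{2}$ and $-\frac{1+\lambda}{\gamma}a_2=\frac{\tau d_1}{2}$, together with two second-order identities carrying an extra $a_2^2$ contribution (the new feature, since the defining operator is no longer linear in $f$).

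From the first-order pair I would deduce $c_1=-d_1$ and $a_2^2=\gamma^2\tau^2(c_1^2+d_1^2)/\big(8(1+\lambda)^2\big)$. Adding the two second-order identities annihilates the $a_3$-terms, and inserting the last expression for $c_1^2+d_1^2$ yields $a_2^2=\gamma^2\tau^2(c_2+d_2)/(2M)$ with $M=\gamma\tau(2+\lambda)(1+\lambda)+2(1+\lambda)^2(1-3\tau)$; the estimate $|c_2|,|d_2|\le2$ from Lemma~\ref{lem-pom} then gives the claimed bound for $|a_2|$. Subtracting the two second-order identities (the $a_2^2$ and the $c_1^2-d_1^2$ terms both drop out) gives $a_3-a_2^2=\gamma\tau(c_2-d_2)/\big(4(2+\lambda)\big)$, hence $|a_3|\le|\gamma||\tau|/(2+\lambda)+|a_2|^2$ and, with the bound for $|a_2|^2$, the stated estimate for $|a_3|$. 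For the Fekete--Szeg\"{o} functional I would write $a_3-\mu a_2^2=(a_3-a_2^2)+(1-\mu)a_2^2$, insert the two displayed expressions, and collect the result as $\big(\Psi(\mu)+\tfrac{\gamma\tau}{4(2+\lambda)}\big)c_2+\big(\Psi(\mu)-\tfrac{\gamma\tau}{4(2+\lambda)}\big)d_2$ with $\Psi(\mu)=(1-\mu)\gamma^2\tau^2/(2M)$; taking moduli and applying Lemma~\ref{lem-pom} bounds this by $4|\Psi(\mu)|$ when $|\Psi(\mu)|\ge|\gamma||\tau|/\big(4(2+\lambda)\big)$ and by $|\gamma||\tau|/(2+\lambda)$ otherwise, and since $|\Psi(\mu)|\le|\gamma||\tau|/\big(4(2+\lambda)\big)$ is equivalent to $|\mu-1|\le M/\big(2|\gamma||\tau|(2+\lambda)\big)$, this is exactly the dichotomy in the statement. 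I expect the fractional-power expansion of $(f(z)/z)^{\lambda-1}$ and its multiplication with $f^{\prime}(z)$ (and with the inverse series) to be the only delicate point; once the four coefficient relations are written down, the remainder is the routine bi-univalent elimination already performed in Theorem~\ref{Th1}.
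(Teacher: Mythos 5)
Your proposal is correct and follows essentially the same route as the paper's proof: the same reduction to Carath\'eodory functions, the same four coefficient relations (your expansion $(2+\lambda)a_3+\tfrac{(\lambda-1)(\lambda+2)}{2}a_2^2$ agrees with the paper's $(2+\lambda)\bigl[a_3+(\lambda-1)\tfrac{a_2^2}{2}\bigr]$), and the same add/subtract elimination leading to the $|a_2|$, $|a_3|$ and Fekete--Szeg\H{o} bounds. The only difference is that you make explicit the binomial expansion of $(f(z)/z)^{\lambda-1}$, which the paper uses silently.
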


\begin{proof}
Since $f\in\mathcal{R}\mathcal{S}\mathcal{L}_{\Sigma}(\gamma, \lambda, \tilde{p})$, from the
Definition \ref{def1.2} we have 
\begin{equation}  \label{3.2}
1+\frac{1}{\gamma}\left(\frac{z^{1-\lambda}f^{\prime }(z)}{(f(z))^{1-\lambda}%
}-1\right) = \widetilde{p(u(z))}
\end{equation}
and 
\begin{equation}  \label{3.3}
1+\frac{1}{\gamma}\left(\frac{w^{1-\lambda}g^{\prime }(w)}{(g(w))^{1-\lambda}%
}-1\right) =\widetilde{p(v(w))}.
\end{equation}
By virtue of \eqref{3.2}, \eqref{3.3}, \eqref{2.4} and \eqref{2.5}, we get 
\begin{equation}  \label{3.6}
\frac{1}{\gamma}(1+\lambda)a_2=\dfrac{c_{1}\tau}{2},
\end{equation}
\begin{equation}  \label{3.7}
\frac{1}{\gamma}(2+\lambda)\left[a_{3}+
\left(\lambda-1\right)\dfrac{a_{2}^{2}}{2}\right]=\dfrac{1}{2}\left(c_{2}-\dfrac{c_{1}^{2}}{2}\right)\tau+\dfrac{3c_{1}^{2}}{4}\tau^{2},
\end{equation}
\begin{equation}  \label{3.8}
-\frac{1}{\gamma}(1+\lambda)a_2=\dfrac{d_{1}\tau}{2}
\end{equation}
and 
\begin{equation}  \label{3.9}
\frac{1}{\gamma}(2+\lambda)\left[\left(3+\lambda\right)\dfrac{a_{2}^{2}}{2}-a_{3}\right] =\dfrac{1}{2}\left(d_{2}-\dfrac{d_{1}^{2}}{2}\right)\tau+\dfrac{3d_{1}^{2}}{4}\tau^{2}.
\end{equation}
From \eqref{3.6} and \eqref{3.8}, we obtain
\begin{equation*} 
c_{1}=-d_{1},
\end{equation*}
and
\begin{eqnarray} 
\dfrac{2}{\gamma^{2}}(1+\lambda)^{2} a_{2}^2 &=& \dfrac{(c_{1}^{2} + d_{1}^{2})\tau^{2}}{4}\nonumber\\
a_{2}^2 &=& \dfrac{\gamma^{2}(c_{1}^{2} + d_{1}^{2})\tau^{2}}{8(1+\lambda)^{2}}~.\label{3.10}
\end{eqnarray}
By adding  \eqref{3.7} and \eqref{3.9}, we have 
\begin{eqnarray}
\dfrac{1}{\gamma}\left(2+\lambda\right)\left(1+\lambda\right) a_{2}^{2}
= \dfrac{1}{2}\left(c_{2}+d_{2}\right)\tau - \dfrac{1}{4}\left(c_{1}^{2}+d_{1}^{2}\right)\tau + \dfrac{3}{4}\left(c_{1}^{2}+d_{1}^{2}\right)\tau^{2}.\label{3.11}
\end{eqnarray}
By substituting  \eqref{3.10} in \eqref{3.11}, we reduce that
\begin{eqnarray}
a_{2}^{2} &=& \dfrac{\gamma^{2}\left(c_{2}+d_{2}\right)\tau^{2}}{2\left[\gamma \tau\left(2+\lambda\right)\left(1+\lambda\right)+2(1-3\tau)(1+\lambda)^{2}\right]}.\label{3.12}
\end{eqnarray}
Now, applying Lemma \ref{lem-pom}, we obtain
\begin{eqnarray}
\left|a_{2}\right| &\leq& \dfrac{\sqrt{2}\left|\gamma\right|\left|\tau\right|}{\sqrt{\gamma \tau\left(2+\lambda\right)\left(1+\lambda\right)+2(1-3\tau)(1+\lambda)^{2}}}.\label{3.13}
\end{eqnarray}
By subtracting \eqref{3.9} from \eqref{3.7}, we obtain
\begin{eqnarray}
a_{3}= \dfrac{\gamma\left(c_{2}-d_{2}\right)\tau}{4\left(2+\lambda\right)}  + a_{2}^{2}.\label{3.14}
\end{eqnarray}
Hence by Lemma \ref{lem-pom}, we have 
\begin{eqnarray}
\left|a_{3}\right|&\leq& \dfrac{\left|\gamma\right|\left(\left|c_{2}\right|+\left|d_{2}\right|\right)\left|\tau\right|}{4\left(2+\lambda\right)}  + \left|a_{2}\right|^{2}\leq  \dfrac{\left|\gamma\right|\left|\tau\right|}{\left(2+\lambda\right)} + \left|a_{2}\right|^{2}
.\label{3.15}
\end{eqnarray}
Then in view of \eqref{3.13}, we obtain
\begin{eqnarray*}
	\left|a_{3}\right|
	\leq 
	\dfrac{\left|\gamma\right|\left|\tau\right|\left\{\gamma \tau \left(2+\lambda\right)\left(1+\lambda\right)+2(1-3\tau)(1+\lambda)^{2}-2\left(2+\lambda\right)\gamma\tau\right\}}{(2+\lambda)\left[\gamma\tau(2+\lambda)\left(1+\lambda\right)+2(1-3\tau)(1+\lambda)^{2}\right]}.
\end{eqnarray*}
	From \eqref{3.14}, we have 
	\begin{eqnarray}\label{Th2-Fekete-e1}
	a_{3} -\mu a_{2}^{2} = \dfrac{\gamma\left(c_{2}-d_{2}\right)\tau}{4\left(2+\lambda\right)} + \left(1-\mu\right)a_{2}^{2}.
	\end{eqnarray}
	By substituting \eqref{3.12} in \eqref{Th2-Fekete-e1}, we have 
	\begin{eqnarray}\label{Th2-Fekete-e2}
	a_{3} -\mu a_{2}^{2} 
	&=& \dfrac{\gamma\left(c_{2}-d_{2}\right)\tau}{4\left(2+\lambda\right)} + \left(1-\mu\right) \left(\dfrac{\gamma^{2}\left(c_{2}+d_{2}\right)\tau^{2}}{2\left[\gamma\tau\left(2+\lambda\right)\left(1+\lambda\right)+2\left(1+\lambda\right)^{2}\left(1-3\tau\right)\right]}\right)\nonumber\\
	&=& \left(h(\mu) + \dfrac{\left|\gamma\right|\left|\tau\right|}{4\left(2+\lambda\right)}\right)c_{2}
	+ \left(h(\mu) - \dfrac{\left|\gamma\right|\left|\tau\right|}{4\left(2+\lambda\right)}\right)d_{2},
	\end{eqnarray}
	where
	\begin{eqnarray*}
		h(\mu) = \dfrac{\left(1-\mu\right)\gamma^{2}\tau^{2}}{2\left[\gamma\tau\left(2+\lambda\right)\left(1+\lambda\right)+2\left(1+\lambda\right)^{2}\left(1-3\tau\right)\right]}.
	\end{eqnarray*}
	Thus by taking modulus of \eqref{Th2-Fekete-e2}, we conclude that
	\begin{eqnarray*}
		\left|a_{3}-\mu a_{2}^{2}\right|
		\leq \left\{
		\begin{array}{ll}
			\dfrac{\gamma\left|\tau\right|}{\left(2+\lambda\right)} &; 
			0 \leq \left|h(\mu)\right|\leq \dfrac{\gamma\left|\tau\right|}{4\left(2+\lambda\right)}\\
			4\left|h(\mu)\right| &; 
			\left|h(\mu)\right|\geq \dfrac{\gamma\left|\tau\right|}{4\left(2+\lambda\right)}.
		\end{array}		
		\right.
	\end{eqnarray*}
This gives desired inequality. 
\end{proof}
\begin{theorem}
	\label{thm4.1} Let $f(z)=z+\sum\limits_{n=2}^{\infty}a_{n}z^{n}$ be in the
	class $\mathcal{S}\mathcal{L}\mathcal{B}_{\Sigma}(\lambda;\tilde{p})$. Then 
	\begin{align*}
	\left|a_2\right|&\leq \dfrac{\left|\tau\right|}{\sqrt{\left(2\lambda-1\right)\left[\tau\left(3-5\lambda\right)+2\lambda-1\right]}},
	\end{align*}
	\begin{align*}
	\left|a_3\right|&\leq \dfrac{\left|\tau\right|\left[(2\lambda-1)^{2}-2\left(5\lambda^{2}-4\lambda+1\right)\tau\right\}}{(2\lambda-1)(3\lambda-1)[(3-5\lambda)\tau+2\lambda-1]}
	\end{align*}
	and
	\begin{eqnarray*}
	\left|a_{3}-\mu a_{2}^{2}\right|
	\leq \left\{
	\begin{array}{ll}
	\dfrac{\left|\gamma\right|\left|\tau\right|}{3\lambda-1} &; 
	0 \leq \left|\mu-1\right|\leq \dfrac{M}{\left|\tau\right|\left(3\lambda-1\right)}\\
	\dfrac{\left|1-\mu\right|\tau^{2}}{M} &; 
	\left|\mu-1\right|\geq \dfrac{M}{\left|\tau\right|\left(3\lambda-1\right)},
	\end{array}		
	\right.
	\end{eqnarray*}
	where
	\begin{eqnarray*}
		M = \left(2\lambda-1\right)\left[\tau\left(3-5\lambda\right)+2\lambda-1\right].
	\end{eqnarray*}
\end{theorem}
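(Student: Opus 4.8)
The plan is to follow the template of the proofs of Theorems \ref{Th1} and \ref{thm3.1}, now applied to the operator $\dfrac{z\left[f'(z)\right]^{\lambda}}{f(z)}$. First I would rewrite the two subordinations of Definition \ref{def1.3} as the equalities $\dfrac{z\left[f'(z)\right]^{\lambda}}{f(z)}=\tilde p(u(z))$ and $\dfrac{w\left[g'(w)\right]^{\lambda}}{g(w)}=\tilde p(v(w))$ with $g=f^{-1}$, introduce the Carath\'eodory functions $h(z)=\frac{1+u(z)}{1-u(z)}=1+c_1z+c_2z^{2}+\cdots$ and $k(w)=\frac{1+v(w)}{1-v(w)}=1+d_1w+d_2w^{2}+\cdots$, and reuse the expansions \eqref{2.4} and \eqref{2.5} of $\tilde p(u(z))$ and $\tilde p(v(w))$ verbatim (recall $\tilde p_1=\tau$, $\tilde p_2=3\tau^{2}$).

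The computational core is the Taylor expansion of the left-hand operator. From $f'(z)=1+2a_2z+3a_3z^{2}+\cdots$ and the binomial series, $\left[f'(z)\right]^{\lambda}=1+2\lambda a_2z+\bigl(3\lambda a_3+2\lambda(\lambda-1)a_2^{2}\bigr)z^{2}+\cdots$; multiplying by $z$ and by $\frac{1}{f(z)}=\frac{1}{z}\bigl(1-a_2z+(a_2^{2}-a_3)z^{2}-\cdots\bigr)$ yields
\[
\frac{z\left[f'(z)\right]^{\lambda}}{f(z)}=1+(2\lambda-1)a_2z+\bigl[(3\lambda-1)a_3+(2\lambda^{2}-4\lambda+1)a_2^{2}\bigr]z^{2}+\cdots,
\]
and the analogue for $g$ follows from the substitution $a_2\mapsto-a_2$, $a_3\mapsto 2a_2^{2}-a_3$. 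Matching coefficients with \eqref{2.4} and \eqref{2.5} produces four relations: $(2\lambda-1)a_2=\frac{c_1\tau}{2}$, $(3\lambda-1)a_3+(2\lambda^{2}-4\lambda+1)a_2^{2}=\frac{1}{2}\bigl(c_2-\frac{c_1^{2}}{2}\bigr)\tau+\frac{3c_1^{2}}{4}\tau^{2}$, $-(2\lambda-1)a_2=\frac{d_1\tau}{2}$, and $(3\lambda-1)(2a_2^{2}-a_3)+(2\lambda^{2}-4\lambda+1)a_2^{2}=\frac{1}{2}\bigl(d_2-\frac{d_1^{2}}{2}\bigr)\tau+\frac{3d_1^{2}}{4}\tau^{2}$.

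From the first and third relations, $c_1=-d_1$ and $a_2^{2}=\frac{(c_1^{2}+d_1^{2})\tau^{2}}{8(2\lambda-1)^{2}}$. Adding the second and fourth relations cancels the $a_3$-terms and leaves $2\lambda(2\lambda-1)a_2^{2}=\frac{1}{2}(c_2+d_2)\tau-\frac{1}{4}(c_1^{2}+d_1^{2})\tau+\frac{3}{4}(c_1^{2}+d_1^{2})\tau^{2}$; substituting the identity for $c_1^{2}+d_1^{2}$ and using the algebraic simplification $(2\lambda-1)\bigl[2\lambda\tau+2(2\lambda-1)-6(2\lambda-1)\tau\bigr]=2M$, where $M=(2\lambda-1)\bigl[\tau(3-5\lambda)+2\lambda-1\bigr]$, reduces this to $a_2^{2}=\frac{(c_2+d_2)\tau^{2}}{4M}$. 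Since $\lambda\ge1$ forces $2\lambda-1>0$, $3\lambda-1>0$, and (because $\tau<0$ and $3-5\lambda<0$) $M>0$, all denominators are admissible, and $|c_2+d_2|\le4$ from Lemma \ref{lem-pom} gives the stated bound on $|a_2|$. Subtracting the fourth relation from the second gives $a_3=\frac{(c_2-d_2)\tau}{4(3\lambda-1)}+a_2^{2}$; bounding by $|c_2|,|d_2|\le2$ and the $|a_2|$ estimate, then collecting over the common denominator $(2\lambda-1)(3\lambda-1)\bigl[\tau(3-5\lambda)+2\lambda-1\bigr]$ and using the identity $(2\lambda-1)(3-5\lambda)-(3\lambda-1)=-2(5\lambda^{2}-4\lambda+1)$, yields the stated bound on $|a_3|$.

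For the Fekete--Szeg\"{o} functional, write $a_3-\mu a_2^{2}=\frac{(c_2-d_2)\tau}{4(3\lambda-1)}+(1-\mu)a_2^{2}=\bigl(h(\mu)+\frac{\tau}{4(3\lambda-1)}\bigr)c_2+\bigl(h(\mu)-\frac{\tau}{4(3\lambda-1)}\bigr)d_2$ with $h(\mu)=\frac{(1-\mu)\tau^{2}}{4M}$. Applying $|c_2|,|d_2|\le2$ gives $|a_3-\mu a_2^{2}|\le 4\max\bigl\{|h(\mu)|,\frac{|\tau|}{4(3\lambda-1)}\bigr\}$, and the comparison $|h(\mu)|\le\frac{|\tau|}{4(3\lambda-1)}$ is equivalent (since $M>0$) to $|\mu-1|\le\frac{M}{|\tau|(3\lambda-1)}$; distinguishing the two cases produces the two-branch estimate. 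The only real obstacle I anticipate is bookkeeping: getting the $z^{2}$-coefficient of $z\left[f'(z)\right]^{\lambda}/f(z)$ exactly right and verifying the identity that turns the $a_2^{2}$-coefficient into $2M$; once these are in hand the argument is identical in structure to Theorems \ref{Th1} and \ref{thm3.1}.
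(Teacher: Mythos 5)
Your proposal is correct and follows essentially the same route as the paper's proof: the same four coefficient relations (your fourth one, $(3\lambda-1)(2a_2^{2}-a_3)+(2\lambda^{2}-4\lambda+1)a_2^{2}$, is exactly the paper's $(2\lambda^{2}+2\lambda-1)a_2^{2}-(3\lambda-1)a_3$), the same add/subtract steps giving $a_2^{2}=\frac{(c_2+d_2)\tau^{2}}{4M}$ and $a_3=\frac{(c_2-d_2)\tau}{4(3\lambda-1)}+a_2^{2}$, and the same $h(\mu)$ decomposition for the Fekete--Szeg\H{o} bound. The only additions are the explicit Taylor expansion of $z[f'(z)]^{\lambda}/f(z)$ and the positivity check on $M$, which the paper leaves implicit.
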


\begin{proof}
	Since $f\in\mathcal{S}\mathcal{L}\mathcal{B}_{\Sigma}(\lambda;\tilde{p})$, from the
	Definition \ref{def1.3} we have 
	\begin{equation}  \label{4.2}
	\frac{z\left[f^{\prime}(z)\right]^{\lambda}}{f(z)}
	 = \widetilde{p(u(z))}
	\end{equation}
	and 
	\begin{equation}  \label{4.3}
	\frac{w\left[g^{\prime}(w)\right]^{\lambda}}{g(w)}
	 =\widetilde{p(v(w))}.
	\end{equation}
	By virtue of \eqref{4.2}, \eqref{4.3}, \eqref{2.4} and \eqref{2.5}, we get 
	\begin{equation}  \label{4.6}
	(2\lambda-1)a_2=\dfrac{c_{1}\tau}{2},
	\end{equation}
	\begin{equation}  \label{4.7}
	(3\lambda-1)a_{3}+(2\lambda^{2}-4\lambda+1)a_{2}^{2}=\dfrac{1}{2}\left(c_{2}-\dfrac{c_{1}^{2}}{2}\right)\tau+\dfrac{3c_{1}^{2}}{4}\tau^{2},
	\end{equation}
	
	\begin{equation}  \label{4.8}
	-(2\lambda-1)a_2=\dfrac{d_{1}\tau}{2}
	\end{equation}
	and 
	\begin{equation}  \label{4.9}
	(2\lambda^{2}+2\lambda-1)a_{2}^{2} - (3\lambda-1)a_{3} =\dfrac{1}{2}\left(d_{2}-\dfrac{d_{1}^{2}}{2}\right)\tau+\dfrac{3d_{1}^{2}}{4}\tau^{2}.
	\end{equation}
	
	From \eqref{4.6} and \eqref{4.8}, we obtain
	\begin{equation*} 
	c_{1}=-d_{1},
	\end{equation*}
	and
	\begin{eqnarray} 
	2(2\lambda-1)^{2} a_{2}^{2} &=& \dfrac{(c_{1}^{2} + d_{1}^{2})\tau^{2}}{4}\nonumber\\
	a_{2}^{2} &=& \dfrac{(c_{1}^{2} + d_{1}^{2})\tau^{2}}{8(2\lambda-1)^{2}}~.\label{4.10}
	\end{eqnarray}
	By adding  \eqref{4.7} and \eqref{4.9}, we have 
	\begin{eqnarray}
	2\lambda\left(2\lambda-1\right) a_{2}^{2}
	= \dfrac{1}{2}\left(c_{2}+d_{2}\right)\tau - \dfrac{1}{4}\left(c_{1}^{2}+d_{1}^{2}\right)\tau + \dfrac{3}{4}\left(c_{1}^{2}+d_{1}^{2}\right)\tau^{2}.\label{4.11}
	\end{eqnarray}
	By substituting  \eqref{4.10} in \eqref{4.11}, we reduce that
	\begin{eqnarray}
	a_{2}^{2} &=& \dfrac{\left(c_{2}+d_{2}\right)\tau^{2}}{4\left(2\lambda-1\right)\left[\tau\left(3-5\lambda\right)+2\lambda-1\right]}.\label{4.12}
	\end{eqnarray}
	Now, applying Lemma \ref{lem-pom}, we obtain
	\begin{eqnarray}
	\left|a_{2}\right| &\leq& \dfrac{\left|\tau\right|}{\sqrt{\left(2\lambda-1\right)\left[\tau\left(3-5\lambda\right)+2\lambda-1\right]}}.\label{4.13}
	\end{eqnarray}
	By subtracting \eqref{4.9} from \eqref{4.7}, we obtain
	\begin{eqnarray}
	a_{3}= \dfrac{\left(c_{2}-d_{2}\right)\tau}{4\left(3\lambda-1\right)}  + a_{2}^{2}.\label{4.14}
	\end{eqnarray}
	Hence by Lemma \ref{lem-pom}, we have 
	\begin{eqnarray}
	\left|a_{3}\right|&\leq& \dfrac{\left(\left|c_{2}\right|+\left|d_{2}\right|\right)\left|\tau\right|}{4\left(3\lambda-1\right)}  + \left|a_{2}\right|^{2}\leq  \dfrac{\left|\tau\right|}{3\lambda-1} + \left|a_{2}\right|^{2}
	.\label{4.15}
	\end{eqnarray}
	Then in view of \eqref{4.13}, we obtain
	\begin{eqnarray*}
		\left|a_{3}\right|
		\leq 
		\dfrac{\left|\tau\right|\left[(2\lambda-1)^{2}-2\left(5\lambda^{2}-4\lambda+1\right)\tau\right\}}{(2\lambda-1)(3\lambda-1)[(3-5\lambda)\tau+2\lambda-1]}
	\end{eqnarray*}
	From \eqref{4.14}, we have 
	\begin{eqnarray}\label{Th3-Fekete-e1}
	a_{3} -\mu a_{2}^{2} = \dfrac{\left(c_{2}-d_{2}\right)\tau}{4\left(3\lambda-1\right)} + \left(1-\mu\right)a_{2}^{2}.
	\end{eqnarray}
	By substituting \eqref{4.12} in \eqref{Th3-Fekete-e1}, we have 
	\begin{eqnarray}\label{Th3-Fekete-e2}
	a_{3} -\mu a_{2}^{2} 
	&=& \dfrac{\left(c_{2}-d_{2}\right)\tau}{4\left(3\lambda-1\right)} + \left(1-\mu\right) \left(\dfrac{\left(c_{2}+d_{2}\right)\tau^{2}}{4\left(2\lambda-1\right)\left[\tau\left(3-5\lambda\right)+2\lambda-1\right]}\right)\nonumber\\
	&=& \left(h(\mu) + \dfrac{\left|\tau\right|}{4\left(3\lambda-1\right)}\right)c_{2}
	+ \left(h(\mu) - \dfrac{\left|\tau\right|}{4\left(3\lambda-1\right)}\right)d_{2},
	\end{eqnarray}
	where
	\begin{eqnarray*}
		h(\mu) = \dfrac{\left(1-\mu\right)\tau^{2}}{4\left(2\lambda-1\right)\left[\tau\left(3-5\lambda\right)+2\lambda-1\right]}.
	\end{eqnarray*}
	Thus by taking modulus of \eqref{Th3-Fekete-e2}, we conclude that
	\begin{eqnarray*}
		\left|a_{3}-\mu a_{2}^{2}\right|
		\leq \left\{
		\begin{array}{ll}
			\dfrac{\left|\tau\right|}{\left(3\lambda-1\right)} &; 
			0 \leq \left|h(\mu)\right|\leq \dfrac{\left|\tau\right|}{4\left(3\lambda-1\right)}\\
			4\left|h(\mu)\right| &; 
			\left|h(\mu)\right|\geq \dfrac{\left|\tau\right|}{4\left(3\lambda-1\right)}.
		\end{array}		
		\right.
	\end{eqnarray*}
	This gives desired inequality. 
\end{proof}
\begin{theorem}
	\label{thm5.1} Let $f(z)=z+\sum\limits_{n=2}^{\infty}a_{n}z^{n}$ be in the
	class $\mathcal{P}\mathcal{S}\mathcal{L}_{\Sigma}(\lambda;\tilde{p})$. Then 
	\begin{align*}
	\left|a_2\right|&\leq \dfrac{\left|\tau\right|}{\sqrt{\left(1+\lambda\right)^{2}-2\tau\left(2\lambda^{2}+2\lambda+1\right)}},
	\end{align*}
	\begin{align*}
	\left|a_3\right|&\leq 
	\dfrac{\left|\tau\right|\left(1-4\tau\right)(1+\lambda)^{2}}{2(1+2\lambda)\left[\left(1+\lambda\right)^{2}-2\tau\left(2\lambda^{2}+2\lambda+1\right)\right]}
	\end{align*}
	and
	\begin{eqnarray*}
	\left|a_{3}-\mu a_{2}^{2}\right|
	\leq \left\{
	\begin{array}{ll}
	\dfrac{\left|\tau\right|}{2+4\lambda} &; 
	0 \leq \left|\mu-1\right|\leq \dfrac{M}{2\left|\tau\right|(1+2\lambda)}\\
	\dfrac{\left|1-\mu\right|\tau^{2}}{M} &; 
	\left|\mu-1\right|\geq \dfrac{M}{2\left|\tau\right|(1+2\lambda)},
	\end{array}		
	\right.
	\end{eqnarray*}
	where
	\begin{eqnarray*}
		M = \left(1+\lambda\right)^{2}-2\tau\left(2\lambda^{2}+2\lambda+1\right).
	\end{eqnarray*}
\end{theorem}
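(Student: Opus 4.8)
The plan is to follow the scheme used in the proofs of Theorems~\ref{Th1}--\ref{thm4.1}. Since $f\in\mathcal{P}\mathcal{S}\mathcal{L}_{\Sigma}(\lambda;\tilde{p})$, Definition~\ref{def1.4} supplies Schwarz functions $u,v$ with
\[
\frac{zf^{\prime}(z)+\lambda z^{2}f^{\prime\prime}(z)}{(1-\lambda)f(z)+\lambda zf^{\prime}(z)}=\tilde{p}(u(z)),\qquad
\frac{wg^{\prime}(w)+\lambda w^{2}g^{\prime\prime}(w)}{(1-\lambda)g(w)+\lambda wg^{\prime}(w)}=\tilde{p}(v(w)),
\]
where $g=f^{-1}$. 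Passing to the Carath\'eodory functions $h=(1+u)/(1-u)=1+c_{1}z+c_{2}z^{2}+\cdots$ and $k=(1+v)/(1-v)=1+d_{1}w+d_{2}w^{2}+\cdots$, the right-hand sides acquire the expansions \eqref{2.4} and \eqref{2.5}, in which $\tilde{p}_{1}=\tau$ and $\tilde{p}_{2}=3\tau^{2}$.

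The one genuinely new computation is the left-hand side. With $f(z)=z+a_{2}z^{2}+a_{3}z^{3}+\cdots$ one finds $zf^{\prime}(z)+\lambda z^{2}f^{\prime\prime}(z)=z+2(1+\lambda)a_{2}z^{2}+3(1+2\lambda)a_{3}z^{3}+\cdots$ and $(1-\lambda)f(z)+\lambda zf^{\prime}(z)=z+(1+\lambda)a_{2}z^{2}+(1+2\lambda)a_{3}z^{3}+\cdots$; dividing and expanding the geometric series, the quotient equals $1+(1+\lambda)a_{2}z+\left(2(1+2\lambda)a_{3}-(1+\lambda)^{2}a_{2}^{2}\right)z^{2}+\cdots$. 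Using $g(w)=w-a_{2}w^{2}+(2a_{2}^{2}-a_{3})w^{3}+\cdots$, the corresponding quotient for $g$ is $1-(1+\lambda)a_{2}w+\left(4(1+2\lambda)a_{2}^{2}-2(1+2\lambda)a_{3}-(1+\lambda)^{2}a_{2}^{2}\right)w^{2}+\cdots$. Equating the coefficients of $z,z^{2}$ with \eqref{2.4} and of $w,w^{2}$ with \eqref{2.5} gives four relations: $(1+\lambda)a_{2}=\tau c_{1}/2$, $-(1+\lambda)a_{2}=\tau d_{1}/2$, together with two second-order identities whose right-hand sides are $\frac{\tau}{2}c_{2}-\frac{\tau}{4}c_{1}^{2}+\frac{3\tau^{2}}{4}c_{1}^{2}$ and the analogous expression in the $d$'s.

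From the two first-order relations one reads off $c_{1}=-d_{1}$ and $a_{2}^{2}=\tau^{2}(c_{1}^{2}+d_{1}^{2})/\left(8(1+\lambda)^{2}\right)$. Adding the two second-order identities, the $a_{3}$-terms cancel and $\left(4(1+2\lambda)-2(1+\lambda)^{2}\right)a_{2}^{2}$ remains on the left; substituting the value of $c_{1}^{2}+d_{1}^{2}$, multiplying through by $\tau$ to clear the resulting $1/\tau$ term, and simplifying the constant (which collapses to $2\left[(1+\lambda)^{2}-2\tau(2\lambda^{2}+2\lambda+1)\right]$) gives
\[
a_{2}^{2}=\frac{\tau^{2}(c_{2}+d_{2})}{4\left[(1+\lambda)^{2}-2\tau(2\lambda^{2}+2\lambda+1)\right]},
\]
and Lemma~\ref{lem-pom} together with $|c_{2}|,|d_{2}|\le2$ yields the stated bound for $|a_{2}|$. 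Subtracting the two second-order identities (the $c_{1}^{2}-d_{1}^{2}$ term vanishes since $c_{1}=-d_{1}$) gives $4(1+2\lambda)(a_{3}-a_{2}^{2})=\frac{\tau}{2}(c_{2}-d_{2})$, hence $a_{3}=a_{2}^{2}+\tau(c_{2}-d_{2})/\left(8(1+2\lambda)\right)$; estimating $|a_{2}|^{2}$ by the bound just obtained and $|c_{2}-d_{2}|\le4$, then combining the two fractions over the common denominator $2(1+2\lambda)\left[(1+\lambda)^{2}-2\tau(2\lambda^{2}+2\lambda+1)\right]$, yields the stated estimate for $|a_{3}|$.

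For the Fekete--Szeg\"{o} inequality I would write $a_{3}-\mu a_{2}^{2}=(1-\mu)a_{2}^{2}+\tau(c_{2}-d_{2})/\left(8(1+2\lambda)\right)$, insert the formula for $a_{2}^{2}$, and regroup as $\left(h(\mu)+\frac{\tau}{8(1+2\lambda)}\right)c_{2}+\left(h(\mu)-\frac{\tau}{8(1+2\lambda)}\right)d_{2}$ with $h(\mu)=(1-\mu)\tau^{2}/(4M)$ and $M=(1+\lambda)^{2}-2\tau(2\lambda^{2}+2\lambda+1)$. Taking moduli and using $|c_{2}|,|d_{2}|\le2$ gives $|a_{3}-\mu a_{2}^{2}|\le4\max\{|h(\mu)|,\,|\tau|/(8(1+2\lambda))\}$, and the two regimes $|h(\mu)|\le|\tau|/(8(1+2\lambda))$ and $|h(\mu)|\ge|\tau|/(8(1+2\lambda))$ translate, via $\tau^{2}=|\tau|^{2}$, into exactly the threshold comparison of $|\mu-1|$ with $M/\left(2|\tau|(1+2\lambda)\right)$ and the two bounds asserted. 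The only step needing care is the algebraic simplification in the addition of the second-order identities --- clearing the $1/\tau$ factor and verifying that the constants telescope to the claimed $M$; everything afterwards is bookkeeping identical to Theorems~\ref{Th1}--\ref{thm4.1}, and since $\tau<0$ we have $M>(1+\lambda)^{2}>0$, so all denominators and the square root are positive and no sign issues arise.
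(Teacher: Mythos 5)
Your proposal is correct and follows essentially the same route as the paper's own proof: the same four coefficient relations (including the key identities $(1+\lambda)a_2=\tau c_1/2$ and $2(1+2\lambda)a_3-(1+\lambda)^2a_2^2=\tfrac{\tau}{2}c_2-\tfrac{\tau}{4}c_1^2+\tfrac{3\tau^2}{4}c_1^2$), the same addition/subtraction to isolate $a_2^2$ and $a_3-a_2^2$, and the same regrouping into $\bigl(h(\mu)\pm\tfrac{\tau}{8(1+2\lambda)}\bigr)$ for the Fekete--Szeg\H{o} bound. Your explicit verification of the quotient expansions and of the positivity of $M$ is, if anything, more careful than the paper's.
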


\begin{proof}
	Since $f\in\mathcal{P}\mathcal{S}\mathcal{L}_{\Sigma}(\lambda;\tilde{p})$, from the
	Definition \ref{def1.4} we have 
	\begin{equation}  \label{5.2}
	\frac{zf^{\prime}(z)+\lambda z^{2}f^{\prime\prime}(z)}{(1-\lambda)f(z)+\lambda zf^{\prime}(z)}
	= \widetilde{p(u(z))}
	\end{equation}
	and 
	\begin{equation}  \label{5.3}
	\frac{wf^{\prime}(w)+\lambda w^{2}g^{\prime\prime}(w)}{(1-\lambda)g(w)+\lambda wg^{\prime}(w)}
	=\widetilde{p(v(w))}.
	\end{equation}
	By virtue of \eqref{5.2}, \eqref{5.3}, \eqref{2.4} and \eqref{2.5}, we get 
	\begin{equation}  \label{5.6}
	(1+\lambda)a_2=\dfrac{c_{1}\tau}{2},
	\end{equation}
	\begin{equation}  \label{5.7}
	2(1+2\lambda)a_{3}-(1+\lambda)^{2}a_{2}^{2}=\dfrac{1}{2}\left(c_{2}-\dfrac{c_{1}^{2}}{2}\right)\tau+\dfrac{3c_{1}^{2}}{4}\tau^{2},
	\end{equation}
	
	\begin{equation}  \label{5.8}
	-(1+\lambda)a_2=\dfrac{d_{1}\tau}{2}
	\end{equation}
	and 
	\begin{equation}  \label{5.9}
	-2(1+2\lambda)a_{3}-(\lambda^{2}-6\lambda-3)a_{2}^{2}  =\dfrac{1}{2}\left(d_{2}-\dfrac{d_{1}^{2}}{2}\right)\tau+\dfrac{3d_{1}^{2}}{4}\tau^{2}.
	\end{equation}
	
	From \eqref{5.6} and \eqref{5.8}, we obtain
	\begin{equation*} 
	c_{1}=-d_{1},
	\end{equation*}
	and
	\begin{eqnarray} 
	2(1+\lambda)^{2} a_{2}^{2} &=& \dfrac{(c_{1}^{2} + d_{1}^{2})\tau^{2}}{4}\nonumber\\
	a_{2}^{2} &=& \dfrac{(c_{1}^{2} + d_{1}^{2})\tau^{2}}{8(1+\lambda)^{2}}~.\label{5.10}
	\end{eqnarray}
	By adding  \eqref{5.7} and \eqref{5.9}, we have 
	\begin{eqnarray}
	2\left(1+2\lambda-\lambda^{2}\right) a_{2}^{2}
	= \dfrac{1}{2}\left(c_{2}+d_{2}\right)\tau - \dfrac{1}{4}\left(c_{1}^{2}+d_{1}^{2}\right)\tau + \dfrac{3}{4}\left(c_{1}^{2}+d_{1}^{2}\right)\tau^{2}.\label{5.11}
	\end{eqnarray}
	By substituting  \eqref{5.10} in \eqref{5.11}, we reduce that
	\begin{eqnarray}
	a_{2}^{2} &=& \dfrac{\left(c_{2}+d_{2}\right)\tau^{2}}{4\left[\left(1+\lambda\right)^{2}-2\tau\left(2\lambda^{2}+2\lambda+1\right)\right]}.\label{5.12}
	\end{eqnarray}
	Now, applying Lemma \ref{lem-pom}, we obtain
	\begin{eqnarray}
	\left|a_{2}\right| &\leq& \dfrac{\left|\tau\right|}{\sqrt{\left(1+\lambda\right)^{2}-2\tau\left(2\lambda^{2}+2\lambda+1\right)}}.\label{5.13}
	\end{eqnarray}
	By subtracting \eqref{5.9} from \eqref{5.7}, we obtain
	\begin{eqnarray}
	a_{3}= \dfrac{\left(c_{2}-d_{2}\right)\tau}{8\left(1+2\lambda\right)}  + a_{2}^{2}.\label{5.14}
	\end{eqnarray}
	Hence by Lemma \ref{lem-pom}, we have 
	\begin{eqnarray*}
	\left|a_{3}\right|&\leq& \dfrac{\left(\left|c_{2}\right|+\left|d_{2}\right|\right)\left|\tau\right|}{8\left(1+2\lambda\right)}  + \left|a_{2}\right|^{2}\leq  \dfrac{\left|\tau\right|}{2+4\lambda} + \left|a_{2}\right|^{2}
	.\label{5.15}
	\end{eqnarray*}
	Then in view of \eqref{5.13}, we obtain
	\begin{eqnarray*}
		\left|a_{3}\right|
		\leq 
		\dfrac{\left|\tau\right|\left(1-4\tau\right)(1+\lambda)^{2}}{2(1+2\lambda)\left[\left(1+\lambda\right)^{2}-2\tau\left(2\lambda^{2}+2\lambda+1\right)\right]}
	\end{eqnarray*}
	From \eqref{5.14}, we have 
	\begin{eqnarray}\label{Th4-Fekete-e1}
	a_{3} -\mu a_{2}^{2} = \dfrac{\left(c_{2}-d_{2}\right)\tau}{8\left(1+2\lambda\right)} + \left(1-\mu\right)a_{2}^{2}.
	\end{eqnarray}
	By substituting \eqref{5.12} in \eqref{Th4-Fekete-e1}, we have 
	\begin{eqnarray}\label{Th4-Fekete-e2}
	a_{3} -\mu a_{2}^{2} 
	&=& \dfrac{\left(c_{2}-d_{2}\right)\tau}{8\left(1+2\lambda\right)} + \left(1-\mu\right) \left(\dfrac{\left(c_{2}+d_{2}\right)\tau^{2}}{4\left[\left(1+\lambda\right)^{2}-2\tau\left(2\lambda^{2}+2\lambda+1\right)\right]}\right)\nonumber\\
	&=& \left(h(\mu) + \dfrac{\tau}{8\left(1+2\lambda\right)}\right)c_{2}
	+ \left(h(\mu) - \dfrac{\tau}{8\left(1+2\lambda\right)}\right)d_{2},
	\end{eqnarray}
	where
	\begin{eqnarray*}
		h(\mu) = \dfrac{\left(1-\mu\right)\tau^{2}}{4\left[\left(1+\lambda\right)^{2}-2\tau\left(2\lambda^{2}+2\lambda+1\right)\right]}.
	\end{eqnarray*}
	Thus by taking modulus of \eqref{Th4-Fekete-e2}, we conclude that
	\begin{eqnarray*}
		\left|a_{3}-\mu a_{2}^{2}\right|
		\leq \left\{
		\begin{array}{ll}
			\dfrac{\left|\tau\right|}{2\left(1+2\lambda\right)} &; 
			0 \leq \left|h(\mu)\right|\leq \dfrac{\left|\tau\right|}{8\left(1+2\lambda\right)}\\
			4\left|h(\mu)\right| &; 
			\left|h(\mu)\right|\geq \dfrac{\left|\tau\right|}{8\left(1+2\lambda\right)}.
		\end{array}		
		\right.
	\end{eqnarray*}
	This gives desired inequality. 
\end{proof}
\section{Corollaries and Consequences}
\begin{corollary}
	\label{cor2.1} Let $f(z)=z+\sum\limits_{n=2}^{\infty}a_{n}z^{n}$ be in the
	class $\mathcal{F}\mathcal{S}\mathcal{L}_{\Sigma}(\gamma, \lambda, \tilde{p})$. Then 
	\begin{align*}
	\left|a_{2}\right| &\leq \dfrac{\left|\gamma\right|\left|\tau\right|}{\sqrt{3\gamma \tau\left(1+ 2\lambda\right)+4(1-3\tau)(1+\lambda)^{2}}},\qquad
	\left|a_3\right|\leq \dfrac{4\left|\gamma\right|\left|\tau\right|(1-3\tau)(1+\alpha)^{2}}{3(1+2\lambda)\left[3\gamma\tau(1+2\lambda)+4(1-3\tau)(1+\lambda)^{2}\right]}
	\end{align*}
	and
	\begin{eqnarray*}
	\left|a_{3}-\mu a_{2}^{2}\right|
	\leq \left\{
	\begin{array}{ll}
	\dfrac{\left|\gamma\right|\left|\tau\right|}{3+6\lambda} &; 
	0 \leq \left|h(\mu)\right|\leq \dfrac{\left|\gamma\right|\left|\tau\right|}{12+24\lambda}\\
	4\left|h(\mu)\right| &; 
	\left|h(\mu)\right|\geq \dfrac{\left|\gamma\right|\left|\tau\right|}{12+24\lambda},
	\end{array}		
	\right.
	\end{eqnarray*}
	where
	\begin{eqnarray*}
		h(\mu) = \dfrac{\left(1-\mu\right)\gamma^{2}\tau^{2}}{4\left[3\gamma\tau(1+2\lambda)+4(1-3\tau)(1+\lambda)^{2}\right]}.
	\end{eqnarray*}
\end{corollary}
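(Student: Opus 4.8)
The plan is to obtain Corollary \ref{cor2.1} as a direct specialization of Theorem \ref{Th1}. By the very definition of the class (item (1) in the list following Definition \ref{def1.1}), we have $\mathcal{F}\mathcal{S}\mathcal{L}_{\Sigma}(\gamma,\lambda,\tilde{p}) \equiv \mathcal{W}\mathcal{S}\mathcal{L}_{\Sigma}(\gamma,\lambda,1+2\lambda,\tilde{p})$, so every estimate proved in Theorem \ref{Th1} applies verbatim once we set $\alpha = 1+2\lambda$. No new analytic input (no new use of Lemma \ref{lem-pom}, no new coefficient identities) is needed; the whole task is bookkeeping.

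First I would record the two algebraic reductions that occur under the substitution $\alpha = 1+2\lambda$:
\begin{align*}
1+2\alpha+2\lambda \;=\; 3+6\lambda \;=\; 3(1+2\lambda), \qquad (1+\alpha)^{2} \;=\; (2+2\lambda)^{2} \;=\; 4(1+\lambda)^{2}.
\end{align*}
Plugging the first into the denominator $\sqrt{\gamma\tau(1+2\alpha+2\lambda)+(1-3\tau)(1+\alpha)^{2}}$ of the $|a_2|$-bound in Theorem \ref{Th1} turns it into $\sqrt{3\gamma\tau(1+2\lambda)+4(1-3\tau)(1+\lambda)^{2}}$, which is exactly the asserted bound. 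Substituting both reductions into the $|a_3|$-bound of Theorem \ref{Th1} converts its denominator into $3(1+2\lambda)\bigl[3\gamma\tau(1+2\lambda)+4(1-3\tau)(1+\lambda)^{2}\bigr]$ and its numerator into $4|\gamma||\tau|(1-3\tau)(1+\lambda)^{2}$, giving the stated inequality for $|a_3|$.

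Next I would carry out the same substitution in the Fekete--Szeg\"o part. The threshold $\dfrac{\gamma|\tau|}{4(1+2\alpha+2\lambda)}$ of Theorem \ref{Th1} becomes $\dfrac{\gamma|\tau|}{12+24\lambda}$, the first-branch value $\dfrac{\gamma|\tau|}{1+2\alpha+2\lambda}$ becomes $\dfrac{\gamma|\tau|}{3+6\lambda}$, and the auxiliary function specializes to
\begin{align*}
h(\mu) \;=\; \dfrac{(1-\mu)\gamma^{2}\tau^{2}}{4\bigl[\gamma\tau(1+2\alpha+2\lambda)+(1+\alpha)^{2}(1-3\tau)\bigr]} \;=\; \dfrac{(1-\mu)\gamma^{2}\tau^{2}}{4\bigl[3\gamma\tau(1+2\lambda)+4(1-3\tau)(1+\lambda)^{2}\bigr]},
\end{align*}
which matches the corollary. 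The piecewise bound on $|a_3-\mu a_2^2|$ then reads off directly from the two-branch estimate in Theorem \ref{Th1}.

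There is no real obstacle: the only point needing a word of care is that the quantities under the radical and in the denominators be nonzero (with the radicand having the appropriate sign), and this holds under the same standing hypotheses $\gamma\in\mathbb{C}\setminus\{0\}$, $\lambda\geq 0$ that make Theorem \ref{Th1} meaningful, since $3+6\lambda>0$ and $(1+\lambda)^{2}>0$. Hence all three assertions of the corollary follow at once from Theorem \ref{Th1}.
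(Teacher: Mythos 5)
Your proposal is correct and is exactly the route the paper intends: the corollary is the specialization $\alpha=1+2\lambda$ of Theorem \ref{Th1}, using $1+2\alpha+2\lambda=3(1+2\lambda)$ and $(1+\alpha)^{2}=4(1+\lambda)^{2}$, with no new analytic input. Your substitution also shows that the $(1+\alpha)^{2}$ left in the numerator of the stated $|a_3|$-bound is a residual typographical slip for $4(1+\lambda)^{2}$, which your bookkeeping resolves correctly.
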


\begin{corollary}
	\label{cor2.2} Let $f(z)=z+\sum\limits_{n=2}^{\infty}a_{n}z^{n}$ be in the
	class $\mathcal{B}\mathcal{S}\mathcal{L}_{\Sigma}(\gamma, \alpha, \tilde{p})$. Then 
	\begin{align*}
	\left|a_{2}\right| &\leq \dfrac{\left|\gamma\right|\left|\tau\right|}{\sqrt{\gamma \tau\left(1+2\alpha\right)+(1-3\tau)(1+\alpha)^{2}}},\qquad
	\left|a_3\right|\leq \dfrac{\left|\gamma\right|\left|\tau\right|\left\{(1-3\tau)(1+\alpha)^{2}\right\}}{(1+2\alpha)\left[\gamma\tau(1+2\alpha)+(1-3\tau)(1+\alpha)^{2}\right]}
	\end{align*}
	and
	\begin{eqnarray*}
		\left|a_{3}-\mu a_{2}^{2}\right|
		\leq \left\{
		\begin{array}{ll}
			\dfrac{\left|\gamma\right|\left|\tau\right|}{1+2\alpha} &; 
			0 \leq \left|h(\mu)\right|\leq \dfrac{\left|\gamma\right|\left|\tau\right|}{4+8\alpha}\\
			4\left|h(\mu)\right| &; 
			\left|h(\mu)\right|\geq \dfrac{\left|\gamma\right|\left|\tau\right|}{4+8\alpha},
		\end{array}		
		\right.
	\end{eqnarray*}
	where
	\begin{eqnarray*}
		h(\mu) = \dfrac{\left(1-\mu\right)\gamma^{2}\tau^{2}}{4\left[\gamma\tau\left(1+2\alpha\right)+\left(1+\alpha\right)^{2}\left(1-3\tau\right)\right]}.
	\end{eqnarray*}
\end{corollary}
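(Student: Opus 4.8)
The plan is to obtain Corollary~\ref{cor2.2} as the special case $\lambda = 0$ of Theorem~\ref{Th1}. First I would recall from item~(2) following Definition~\ref{def1.1} that putting $\lambda = 0$ in the subordination conditions \eqref{CR-RAP-NM-P1-e1}--\eqref{CR-RAP-NM-P1-e2} reduces them to
\begin{equation*}
1+\frac{1}{\gamma}\left((1-\alpha)\frac{f(z)}{z}+\alpha f^{\prime}(z)-1\right) \prec \widetilde{p(z)}
\end{equation*}
together with the analogous condition for $g = f^{-1}$, which is precisely the defining pair for $\mathcal{B}\mathcal{S}\mathcal{L}_{\Sigma}(\gamma,\alpha,\tilde{p})$. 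Hence $\mathcal{B}\mathcal{S}\mathcal{L}_{\Sigma}(\gamma,\alpha,\tilde{p}) = \mathcal{W}\mathcal{S}\mathcal{L}_{\Sigma}(\gamma,0,\alpha,\tilde{p})$, so every estimate established in Theorem~\ref{Th1} applies here verbatim after the substitution $\lambda \mapsto 0$.

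Next I would carry out that substitution in each of the three conclusions of Theorem~\ref{Th1}. In the bound for $|a_2|$ the factor $1+2\alpha+2\lambda$ collapses to $1+2\alpha$, which yields the stated inequality; the same replacement in the bound for $|a_3|$ and in the piecewise Fekete--Szeg\"{o} estimate produces the displayed formulas, with the auxiliary function becoming
\begin{equation*}
h(\mu) = \dfrac{(1-\mu)\gamma^{2}\tau^{2}}{4\left[\gamma\tau(1+2\alpha)+(1+\alpha)^{2}(1-3\tau)\right]},
\end{equation*}
and the threshold $\dfrac{|\gamma||\tau|}{4(1+2\alpha+2\lambda)}$ separating the two branches becoming $\dfrac{|\gamma||\tau|}{4+8\alpha}$.

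Because the argument is purely a specialization, I do not expect any genuine obstacle. The only point worth verifying is that the intermediate identities in the proof of Theorem~\ref{Th1} — in particular the two expressions for $a_3$ arising from \eqref{2.7} and \eqref{2.9} — remain valid at $\lambda = 0$; they do, since $\lambda$ enters only through the coefficient $1+2\alpha+2\lambda$ of $a_3$, which stays nonzero, so no degeneracy occurs and the corollary follows by direct substitution into the conclusions of Theorem~\ref{Th1}.
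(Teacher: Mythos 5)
Your proposal is correct and coincides with what the paper does: Corollary \ref{cor2.2} is presented in the ``Corollaries and Consequences'' section precisely as the specialization $\lambda=0$ of Theorem \ref{Th1}, using the identification $\mathcal{B}\mathcal{S}\mathcal{L}_{\Sigma}(\gamma,\alpha,\tilde{p})=\mathcal{W}\mathcal{S}\mathcal{L}_{\Sigma}(\gamma,0,\alpha,\tilde{p})$ from item (2) after Definition \ref{def1.1}. Your substitution of $\lambda=0$ into each of the three bounds and into $h(\mu)$ reproduces the stated formulas exactly, and your observation that no degeneracy occurs (the coefficient $1+2\alpha+2\lambda$ stays nonzero) is the only check needed.
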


\begin{corollary}
	\label{cor2.3} Let $f(z)=z+\sum\limits_{n=2}^{\infty}a_{n}z^{n}$ be in the
	class $\mathcal{H}\mathcal{S}\mathcal{L}_{\Sigma}(\gamma, \tilde{p})$. Then 
	\begin{align*}
	\left|a_{2}\right| &\leq \dfrac{\left|\gamma\right|\left|\tau\right|}{\sqrt{3\gamma \tau+4(1-3\tau)}},
	\qquad
	\left|a_3\right|\leq \dfrac{4\left|\gamma\right|\left|\tau\right|(1-3\tau)}{3\left[3\gamma\tau+4(1-3\tau)\right]},
	\end{align*}
	and
	\begin{eqnarray*}
		\left|a_{3}-\mu a_{2}^{2}\right|
		\leq \left\{
		\begin{array}{ll}
			\dfrac{\left|\gamma\right|\left|\tau\right|}{3} &; 
			0 \leq \left|h(\mu)\right|\leq \dfrac{\left|\gamma\right|\left|\tau\right|}{12}\\
			4\left|h(\mu)\right| &; 
			\left|h(\mu)\right|\geq \dfrac{\left|\gamma\right|\left|\tau\right|}{12},
		\end{array}		
		\right.
	\end{eqnarray*}
	where
	\begin{eqnarray*}
		h(\mu) = \dfrac{\left(1-\mu\right)\gamma^{2}\tau^{2}}{4\left[3\gamma\tau+4\left(1-3\tau\right)\right]}.
	\end{eqnarray*}
\end{corollary}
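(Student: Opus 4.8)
The plan is to obtain Corollary \ref{cor2.3} as a direct specialization of Theorem \ref{Th1}. Recall from item (3) following Definition \ref{def1.1} that $\mathcal{H}\mathcal{S}\mathcal{L}_{\Sigma}(\gamma,\tilde{p})=\mathcal{W}\mathcal{S}\mathcal{L}_{\Sigma}(\gamma,0,1,\tilde{p})$, i.e.\ it is exactly the class treated in Theorem \ref{Th1} with the parameter choices $\alpha=1$ and $\lambda=0$. So the first (and essentially only) step is to put $\alpha=1$, $\lambda=0$ into the three conclusions of Theorem \ref{Th1} and simplify; no new estimation is needed, since all the hard analytic work — the passage from the subordination conditions \eqref{CR-RAP-NM-P1-e1}--\eqref{CR-RAP-NM-P1-e2} through the Schwarz/Carath\'eodory expansions \eqref{2.4}--\eqref{2.5} and the application of Lemma \ref{lem-pom} — is already done there.

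Carrying out the substitution: with $\alpha=1$, $\lambda=0$ one has $1+2\alpha+2\lambda=3$ and $(1+\alpha)^{2}=4$, so the radicand $\gamma\tau(1+2\alpha+2\lambda)+(1-3\tau)(1+\alpha)^{2}$ becomes $3\gamma\tau+4(1-3\tau)$, which yields $|a_{2}|\le|\gamma||\tau|/\sqrt{3\gamma\tau+4(1-3\tau)}$. The same substitutions turn the numerator factor $(1-3\tau)(1+\alpha)^{2}$ in the $|a_{3}|$ bound into $4(1-3\tau)$ and the denominator $(1+2\alpha+2\lambda)[\gamma\tau(1+2\alpha+2\lambda)+(1-3\tau)(1+\alpha)^{2}]$ into $3[3\gamma\tau+4(1-3\tau)]$, giving $|a_{3}|\le 4|\gamma||\tau|(1-3\tau)/\{3[3\gamma\tau+4(1-3\tau)]\}$. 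For the Fekete--Szeg\H{o} assertion the flat bound $\gamma|\tau|/(1+2\alpha+2\lambda)$ becomes $\gamma|\tau|/3$, the threshold $\gamma|\tau|/\{4(1+2\alpha+2\lambda)\}$ becomes $\gamma|\tau|/12$, and
\[
h(\mu)=\frac{(1-\mu)\gamma^{2}\tau^{2}}{4\bigl[\gamma\tau(1+2\alpha+2\lambda)+(1+\alpha)^{2}(1-3\tau)\bigr]}
\]
becomes $h(\mu)=(1-\mu)\gamma^{2}\tau^{2}/\{4[3\gamma\tau+4(1-3\tau)]\}$, exactly as stated. Feeding these into the case distinction of Theorem \ref{Th1} reproduces the displayed inequality for $|a_{3}-\mu a_{2}^{2}|$.

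Since every assertion is a purely algebraic restriction of an already-established inequality, there is no real obstacle: the only point requiring care is the routine bookkeeping (e.g.\ recognizing $6\gamma\tau+8(1-3\tau)=2[3\gamma\tau+4(1-3\tau)]$) so that the simplified coefficients match the normalization used in the statement. As a consistency check one may note that the same corollary also drops out of Corollary \ref{cor2.2} on setting $\alpha=1$, and out of Theorem \ref{thm3.1} on setting $\lambda=1$ (since $\mathcal{R}\mathcal{S}\mathcal{L}_{\Sigma}(\gamma,1,\tilde{p})\equiv\mathcal{H}\mathcal{S}\mathcal{L}_{\Sigma}(\gamma,\tilde{p})$), all three routes giving the identical bounds; alternatively one could re-derive it from scratch by repeating the argument of Theorem \ref{Th1} with the markedly simpler relations produced by $1+\tfrac1\gamma(f'(z)-1)\prec\tilde{p}$.
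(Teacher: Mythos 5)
Your proposal is correct and matches the paper's intent exactly: Corollary \ref{cor2.3} is stated without a separate proof precisely because it is the specialization $\alpha=1$, $\lambda=0$ of Theorem \ref{Th1}, and your substitutions (including the simplifications $1+2\alpha+2\lambda=3$ and $(1+\alpha)^{2}=4$) reproduce all three displayed bounds. The cross-checks against Corollary \ref{cor2.2} and Theorem \ref{thm3.1} are a nice additional confirmation but not needed.
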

\begin{corollary}\cite{HOG-GMS-JS-Fib-2018}
	\label{cor4.1} Let $f(z)=z+\sum\limits_{n=2}^{\infty}a_{n}z^{n}$ be in the
	class $\mathcal{S}\mathcal{L}_{\Sigma}(\tilde{p}).$ Then 
	\begin{align*}
	\left|a_2\right|&\leq \dfrac{\left|\tau\right|}{\sqrt{1-2\tau}}, \;\;\;\; 
	\left|a_3\right| \leq \dfrac{\left|\tau\right|(1-4\tau)}{2-4\tau}
	\end{align*}
	and
	\begin{eqnarray*}
	\left|a_{3}-\mu a_{2}^{2}\right|
	\leq \left\{
	\begin{array}{ll}
	\dfrac{\left|\tau\right|}{2} &; 
	0 \leq \left|\mu-1\right|\leq \dfrac{1-2\tau}{2\left|\tau\right|}\\
	\dfrac{\left|1-\mu\right|\tau^{2}}{1-2\tau} &; 
	\left|\mu-1\right|\geq \dfrac{1-2\tau}{2\left|\tau\right|}.
	\end{array}		
	\right.
	\end{eqnarray*}
\end{corollary}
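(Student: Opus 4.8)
The plan is to derive this corollary directly from one of the main theorems of the previous section by specializing the parameters, rather than repeating the coefficient computation. Indeed, by the special case recorded after Definition~\ref{def1.4} we have $\mathcal{P}\mathcal{S}\mathcal{L}_{\Sigma}(0;\tilde{p})\equiv\mathcal{S}\mathcal{L}_{\Sigma}(\tilde{p})$, so it suffices to put $\lambda=0$ in Theorem~\ref{thm5.1}. (Equivalently one could put $\lambda=1$ in Theorem~\ref{thm4.1}, since $\mathcal{S}\mathcal{L}\mathcal{B}_{\Sigma}(1;\tilde{p})\equiv\mathcal{S}\mathcal{L}_{\Sigma}(\tilde{p})$, or put $\gamma=1,\ \lambda=0$ in Theorem~\ref{thm3.1}; all three routes give the same bounds.) No new estimates are needed: the whole proof is the substitution $\lambda=0$ followed by elementary simplification.

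Carrying this out, in the $|a_2|$-bound of Theorem~\ref{thm5.1} the radicand $(1+\lambda)^2-2\tau(2\lambda^2+2\lambda+1)$ becomes $1-2\tau$, giving $|a_2|\le|\tau|/\sqrt{1-2\tau}$. In the $|a_3|$-bound, the factor $(1+\lambda)^2$ becomes $1$, the factor $(1+2\lambda)$ becomes $1$, and the bracketed term becomes $1-2\tau$ again, so $|a_3|\le|\tau|(1-4\tau)/\bigl[2(1-2\tau)\bigr]=|\tau|(1-4\tau)/(2-4\tau)$. Finally, in the Fekete--Szeg\"o estimate the constant $M=(1+\lambda)^2-2\tau(2\lambda^2+2\lambda+1)$ collapses to $1-2\tau$; the first-branch bound $|\tau|/(2+4\lambda)$ becomes $|\tau|/2$, the threshold $M/\bigl(2|\tau|(1+2\lambda)\bigr)$ becomes $(1-2\tau)/(2|\tau|)$, and the second-branch bound $|1-\mu|\tau^2/M$ becomes $|1-\mu|\tau^2/(1-2\tau)$, which is exactly the asserted piecewise inequality.

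There is no real obstacle here; the only thing to watch is the purely arithmetic check that the three equivalent specializations agree --- for instance, in Theorem~\ref{thm3.1} the $\sqrt2$ in the numerator of the $|a_2|$-bound cancels the $\sqrt2$ coming from $2\tau+2(1-3\tau)=2(1-2\tau)$ in the radicand, and the $|\gamma|$ appearing in the Fekete--Szeg\"o line of Theorem~\ref{thm4.1} is $1$ for this class. I would therefore simply invoke the $\lambda=0$ case of Theorem~\ref{thm5.1} and record the three simplified inequalities above.
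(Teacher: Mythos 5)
Your proposal is correct and is essentially the paper's own (implicit) derivation: the corollary is obtained by setting $\lambda=0$ in Theorem~\ref{thm5.1} via the identification $\mathcal{P}\mathcal{S}\mathcal{L}_{\Sigma}(0;\tilde{p})\equiv\mathcal{S}\mathcal{L}_{\Sigma}(\tilde{p})$, and your simplifications of the $|a_2|$, $|a_3|$ and Fekete--Szeg\"{o} bounds all check out. The alternative specializations you mention (Theorem~\ref{thm4.1} with $\lambda=1$, Theorem~\ref{thm3.1} with $\gamma=1$, $\lambda=0$) indeed give the same constants, so nothing further is needed.
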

\begin{corollary}\cite{HOG-GMS-JS-Fib-2018}
	\label{cor4.2} Let $f(z)=z+\sum\limits_{n=2}^{\infty}a_{n}z^{n}$ be in the
	class $\mathcal{K}\mathcal{S}\mathcal{L}_{\Sigma}(\tilde{p}).$ Then 
	\begin{align*}
	\left|a_2\right|&\leq \dfrac{\left|\tau\right|}{\sqrt{4-10\tau}}, \;\;\;\; 
	\left|a_3\right| \leq \dfrac{\left|\tau\right|(1-4\tau)}{6-15\tau}.
	\end{align*}
	and
	\begin{eqnarray*}
	\left|a_{3}-\mu a_{2}^{2}\right|
	\leq \left\{
	\begin{array}{ll}
	\dfrac{\left|\tau\right|}{6} &; 
	0 \leq \left|\mu-1\right|\leq \dfrac{2-5\tau}{3\left|\tau\right|}\\
	\dfrac{\left|1-\mu\right|\tau^{2}}{4-10\tau} &; 
	\left|\mu-1\right|\geq \dfrac{2-5\tau}{6\left|\tau\right|}.
	\end{array}		
	\right.
	\end{eqnarray*}
\end{corollary}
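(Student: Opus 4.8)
The plan is to deduce Corollary \ref{cor4.2} as the special case $\lambda=1$ of Theorem \ref{thm5.1}. Recall from the remark following Definition \ref{def1.4} that $\mathcal{P}\mathcal{S}\mathcal{L}_{\Sigma}(1;\tilde{p})\equiv\mathcal{K}\mathcal{S}\mathcal{L}_{\Sigma}(\tilde{p})$: putting $\lambda=1$ turns the defining quotient $\dfrac{zf^{\prime}(z)+\lambda z^{2}f^{\prime\prime}(z)}{(1-\lambda)f(z)+\lambda zf^{\prime}(z)}$ of $\mathcal{P}\mathcal{S}\mathcal{L}_{\Sigma}(\lambda;\tilde{p})$ into the convex-type functional that defines $\mathcal{K}\mathcal{S}\mathcal{L}_{\Sigma}(\tilde{p})$, and likewise with $g=f^{-1}$ in place of $f$. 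Hence every $f\in\mathcal{K}\mathcal{S}\mathcal{L}_{\Sigma}(\tilde{p})$ satisfies the hypotheses of Theorem \ref{thm5.1} with $\lambda=1$, and it remains only to substitute $\lambda=1$ into the three conclusions of that theorem and simplify.

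Carrying this out, the quantity $M=(1+\lambda)^{2}-2\tau(2\lambda^{2}+2\lambda+1)$ in Theorem \ref{thm5.1} becomes $M=4-10\tau$ at $\lambda=1$, and $1+2\lambda=3$. Thus the estimate $|a_{2}|\le\dfrac{|\tau|}{\sqrt{M}}$ becomes $|a_{2}|\le\dfrac{|\tau|}{\sqrt{4-10\tau}}$; the estimate $|a_{3}|\le\dfrac{|\tau|(1-4\tau)(1+\lambda)^{2}}{2(1+2\lambda)M}$ becomes $|a_{3}|\le\dfrac{4|\tau|(1-4\tau)}{6(4-10\tau)}=\dfrac{|\tau|(1-4\tau)}{6-15\tau}$ after cancelling the common factor $2$; and in the Fekete--Szeg\"{o} bound the flat value $\dfrac{|\tau|}{2(1+2\lambda)}$ becomes $\dfrac{|\tau|}{6}$, the common threshold $\dfrac{M}{2|\tau|(1+2\lambda)}$ becomes $\dfrac{4-10\tau}{6|\tau|}=\dfrac{2-5\tau}{3|\tau|}$, and the growing value $\dfrac{|1-\mu|\tau^{2}}{M}$ becomes $\dfrac{|1-\mu|\tau^{2}}{4-10\tau}$. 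These agree with the estimates displayed in Corollary \ref{cor4.2} (originally obtained by G\"{u}ney et al. \cite{HOG-GMS-JS-Fib-2018}).

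If one prefers a self-contained argument, it runs exactly as the proof of Theorem \ref{thm5.1} with $\lambda=1$ fixed throughout: express the two subordinations in Definition \ref{def1.4} (at $\lambda=1$) as $\tilde{p}(u(z))$ and $\tilde{p}(v(w))$ for Schwarz functions $u,v$; pass to the Carath\'{e}odory functions $h=\frac{1+u}{1-u}=1+c_{1}z+\cdots$ and $k=\frac{1+v}{1-v}=1+d_{1}w+\cdots$ in $\mathcal{P}$; insert the expansions \eqref{2.4} and \eqref{2.5} (with $\tilde{p}_{1}=\tau$, $\tilde{p}_{2}=3\tau^{2}$); and compare the coefficients of $z$, $z^{2}$ and of $w$, $w^{2}$. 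This gives $c_{1}=-d_{1}$, a formula for $a_{2}^{2}$ in terms of $c_{2}+d_{2}$, and a formula for $a_{3}$ — hence for $a_{3}-\mu a_{2}^{2}$ — in terms of $c_{2}-d_{2}$; Lemma \ref{lem-pom} ($|c_{i}|,|d_{i}|\le 2$) then yields the bounds on $|a_{2}|$ and $|a_{3}|$, while writing $a_{3}-\mu a_{2}^{2}$ as a linear combination of $c_{2}$ and $d_{2}$ and applying the same lemma produces the two-regime Fekete--Szeg\"{o} bound, the two cases corresponding to whether or not $|h(\mu)|$ exceeds the value of $\dfrac{|\tau|}{8(1+2\lambda)}$ at $\lambda=1$, namely $\dfrac{|\tau|}{24}$. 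I expect no genuine obstacle here: the assertion is a direct corollary, and the only subtlety — the same one pervading the whole paper — is that since $\tilde{p}$ fails to be univalent on $\mathbb{D}$ the subordinations must be handled through Schwarz functions rather than through $\tilde{p}^{-1}$; granting that, the rest is routine algebra, the sole chore being the four substitutions at $\lambda=1$ recorded above.
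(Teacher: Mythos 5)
Your proposal is correct and follows exactly the route the paper intends: Corollary \ref{cor4.2} is the specialization $\lambda=1$ of Theorem \ref{thm5.1} (via $\mathcal{P}\mathcal{S}\mathcal{L}_{\Sigma}(1;\tilde{p})\equiv\mathcal{K}\mathcal{S}\mathcal{L}_{\Sigma}(\tilde{p})$), and all four of your substitutions check out. Your consistent threshold $\tfrac{2-5\tau}{3|\tau|}$ in both regimes also exposes a typo in the paper's statement, whose second case prints $\tfrac{2-5\tau}{6|\tau|}$ instead.
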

\begin{remark}
	Results discussed in Corollaries \ref{cor4.1} and \ref{cor4.2} are coincide with bounds obtained in \cite[Corollary 1, Corollary 2, Corollary 4 and Corollary 5]{HOG-GMS-JS-Fib-2018}. 
\end{remark}

\end{document}